\documentclass[11pt]{article}

\usepackage[a4paper,margin=1.15in]{geometry}
\usepackage[T1]{fontenc}
\usepackage[utf8]{inputenc}
\usepackage{amsmath,amssymb,amsfonts,amsthm,mathtools}
\usepackage{enumitem}
\usepackage{mathrsfs}
\usepackage[hidelinks]{hyperref}

\theoremstyle{plain}
\newtheorem{theorem}{Theorem}[section]
\newtheorem{lemma}[theorem]{Lemma}

\theoremstyle{definition}
\newtheorem{definition}[theorem]{Definition}
\newtheorem{convention}[theorem]{Convention}

\newtheorem{question}[theorem]{Question}

\theoremstyle{remark}

\newcommand{\Pp}{\mathbb P}
\newcommand{\Qq}{\mathbb Q}
\newcommand{\Kk}{\mathbb K}

\newcommand{\Rr}{\mathbb R}
\newcommand{\Ll}{\mathbb L}

\newcommand{\dotQ}{\dot{\mathbb Q}}

\newcommand{\MA}{\mathsf{MA}}
\newcommand{\BPFA}{\mathsf{BPFA}}
\newcommand{\ZFC}{\mathsf{ZFC}}
\newcommand{\ZF}{\mathsf{ZF}}
\newcommand{\CH}{\mathsf{CH}}
\newcommand{\GCH}{\mathsf{GCH}}

\newcommand{\cof}{\operatorname{cof}}
\newcommand{\Lim}{\operatorname{Lim}}

\newcommand{\PsiThree}{\Psi_{3}}
\newcommand{\WO}{\operatorname{WO}}
\newcommand{\supp}{\operatorname{supp}}

\newcommand{\Wground}{W}

\title{Martin's Axiom, Large Continuum and Global $\Sigma^1_n$-Uniformization}
\author{Stefan Hoffelner\footnote{The author's research was funded in whole by the Austrian Science Fund (FWF), Grant DOI 10.55776/P37228. For the purpose of open access, the author has applied a CC BY public copyright license to any Author Accepted Manuscript version arising from this submission.}\\
Set Theory Research Group, Institute of Discrete Mathematics and Geometry, TU Wien\\
Wiedner Hauptstrasse 8--10/104, 1040 Wien, Austria\\
\texttt{stefan.hoffelner@tuwien.ac.at}}
\date{}

\begin{document}

\maketitle

\begin{abstract}
We construct a generic extension of $L$ satisfying Martin's Axiom,
$2^{\aleph_0}=\aleph_3$, a lightface $\Delta^1_3$ wellorder of the reals, and
$\Sigma^1_n$-uniformization for every $n\geq 2$ simultaneously. 
\end{abstract}

\medskip
\noindent\textbf{Keywords:} Martin's Axiom; projective uniformization; projective wellorders; Suslin trees; forcing; cardinal characteristics.

\section{Introduction}

Uniformization is a central problem in the descriptive set theory of definable
sets of reals.  Given \(A\subseteq\Rr\times\Rr\), a uniformization of \(A\) is a
function whose graph is contained in \(A\) and whose domain is the projection of
\(A\) on the first coordinate.  The projective uniformization problem asks
whether a projective set in the plane admits a uniformization of the same
projective complexity.  The Kondo--Novikov theorem gives the first positive
theorem in \(\ZFC\): coanalytic sets, and hence also \(\Sigma^1_2\) sets, admit
same-level uniformizations; see, for example, \cite{Kechris}.  Beyond this level
the behaviour of uniformization depends strongly on the ambient universe.

For a long time, the only general method for obtaining global
\(\Sigma\)-uniformization in models of \(\ZFC\) came from a good projective
wellorder of the reals.  Addison showed that a good \(\Delta^1_n\) wellorder
yields \(\Sigma^1_m\)-uniformization for all \(m\geq n\)
\cite{Addison}.  Thus \(L\) satisfies the global
\(\Sigma\)-uniformization pattern by means of its canonical good projective
wellorder.  This method is powerful but restrictive: the resulting universes are
close to the constructible universe and satisfy a strong definable version of
the continuum hypothesis.

Recent work has produced a different mechanism.  Instead of deriving
uniformization from an already available good wellorder, one can force
projective predicates which copy the truth values needed for uniformization.
This makes it possible to study global \(\Sigma\)-uniformization in much richer
universes of sets of reals, for example in the presence of forcing axioms, large
continuum, or prescribed behaviour of cardinal characteristics.  The present
paper belongs to this line of research.  It combines the
branch-versus-specialize coding apparatus of Fischer--Friedman--Zdomskyy with a
copying construction for projective truth values, and obtains global
\(\Sigma\)-uniformization together with Martin's Axiom and continuum
\(\aleph_3\).

This work is part of a broader program investigating separation, reduction and
uniformization in non-\(\mathsf{PD}\) universes.  Earlier contributions include
forcing the \(\Sigma^1_3\)-separation property
\cite{HoffelnerSeparation}, forcing \(\Pi^1_n\)-uniformization
\cite{HoffelnerPiUniformization}, separating \(\Pi^1_3\)-reduction from
\(\Pi^1_3\)-uniformization \cite{HoffelnerPiReduction}, and obtaining
\(\Sigma^1_3\)-uniformization from forcing axioms
\cite{HoffelnerForcingAxioms}.  The global copying method for
\(\Sigma\)-uniformization was developed further in the \(\BPFA\) setting in
\cite{BPFA_and_global_Sigma-uniformization}.  Related recent work studies
separation versus reduction, projective wellorders together with
\(\Pi\)-side uniformization, mixtures of upper \(\Sigma\)-uniformization with
lower \(\Pi\)-side phenomena, and distinctions between boldface and lightface
uniformization at low projective levels; see
\cite{HoffelnerSeparationReduction,HoffelnerCHPiUniformization,HoffelnerUpperSigma,HoffelnerSigma34}.

\begin{theorem}[Main theorem]
There is a generic extension of \(L\) satisfying
\[
\MA+2^{\aleph_0}=\aleph_3
\]
such that the reals have a lightface \(\Delta^1_3\) wellorder and, for every
\(n\geq 2\), every boldface \(\Sigma^1_n\) set of pairs of reals admits a
boldface \(\Sigma^1_n\) uniformization.
\end{theorem}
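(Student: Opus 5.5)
We build the model as a countable support (or more precisely, finite support over an $\omega_1$-preserving coding framework) iteration of length $\omega_3$ over $L$, following the Fischer--Friedman--Zdomskyy template for $\MA$ with a $\Delta^1_3$ wellorder and large continuum. Work in $L$ and fix an $\omega_2$-sequence, or rather an $L$-definable $\omega_3$-sequence, of pairwise almost disjoint families of $\omega_1$-trees. Concretely, use a $\diamondsuit$-sequence in $L$ to build $\vec S=(S_\alpha:\alpha<\omega_3)$, a definable independent sequence of Suslin trees whose products remain Suslin. A real $x$ is coded by the pattern \emph{branch $S_{\alpha+2n}$ versus specialize $S_{\alpha+2n+1}$} according to the bits of $x$. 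This pattern is then localized to countable, suitable models via almost disjoint coding and ``reshaping,'' so that the statement ``$x$ is coded at $\alpha$'' becomes $\Sigma^1_3$ in a real witness: there is a real $r$ such that every countable transitive model $M$ containing $r$, satisfying a weak fragment of $\ZF^-$ and $\omega_1^M=(\omega_1^L)^M$, sees the coding pattern in $L^M$. The iteration is finite support and ccc. It interleaves three kinds of factors. First, small ccc posets handed to us by a bookkeeping function, which yields $\MA$ and $2^{\aleph_0}=\aleph_3$ after $\omega_3$ steps. Second, coding forcings that write each new real $x_\beta$ into the $\beta$-th block of $\vec S$, which yields the lightface $\Delta^1_3$ wellorder: $x<y$ iff the block coding $x$ precedes that coding $y$, with a $\Sigma^1_3$ definition and complement. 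Third, the copying factors for uniformization.

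For uniformization we follow the copying method. For $n\geq 2$ and a $\Sigma^1_n$ formula $\varphi(x,y,z)$, the bookkeeping lists all triples $(\varphi, x, a)$ with $x$ and parameter $a$ in an intermediate model. At the stage where $(\varphi,x,a)$ is considered, if some $y$ satisfies $\varphi(x,y,a)$ in the current model in a way that persists to the final model, we pick the $<$-least such $y$. We then code the tuple $(\varphi,x,a,y)$ into a reserved block of $\vec S$ which serves as a ``truth-value copy.'' The uniformizing function $f(x)=y$ is defined by \emph{$y$ is the value copied for $(\varphi,x,a)$}. Since a copy is $\Sigma^1_3$, $f$ is $\Sigma^1_n$ for $n\geq 3$ as long as we can express ``copy made and $\varphi(x,y,a)$ holds.'' The case $n=2$ is handled by Kondo--Novikov, which holds in $\ZFC$. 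For $n\geq 3$ we also need to ensure that copies are made only for pairs that truly remain in the set. We achieve this by copying $\Pi$-side truth values using the good structure of the wellorder. Addison's argument then adapts: the wellorder is $\Delta^1_3$, and the copies turn the ``$<$-least witness'' quantification, which would normally require goodness, into an existential statement over copy codes.

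The main obstacle is upward absoluteness of projective statements along the tail of the iteration. A $\Sigma^1_n$ fact about $x$ at stage $\beta$ may fail in the final model, and a $\Pi$ fact may be destroyed by later ccc forcing, in particular by the $\MA$ factors. The plan is to handle this with a bookkeeping that revisits every triple cofinally often, together with a uniform homogeneity argument. All factors are finite support iterations of uniformly definable ccc posets, so the tail of the iteration is, over each intermediate model, of the same definable form. Consequently the truth of $\varphi(x,y,a)$ in the final model is decided by a forcing statement in the intermediate model. We copy the forced value rather than the current truth value, copying \emph{``the tail forces $\varphi(x,y,a)$''} for the least such $y$. A second delicate point is ensuring that the coding forcings do not accidentally create unintended codes, and that the trees used for copying stay Suslin under the $\MA$ factors. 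This is the standard step where we must ensure that the ccc posets handled by $\MA$ are only those which preserve the relevant Suslin trees in the finite product sense. We expect this to require the Fischer--Friedman--Zdomskyy trick of forcing $\MA$ only for posets preserving the reserved trees, and then proving full $\MA$ by a density argument.
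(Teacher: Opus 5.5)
\textbf{There is a genuine gap in your uniformization step.} The requirement you place on it cannot be met by any construction.

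At stage $\alpha$ you want to know whether $\varphi(x,y,a)$ holds ``in a way that persists to the final model'', or whether ``the tail forces'' it. For $n\geq 3$ a $\Sigma^1_n$ statement is not absolute between $\Wground[G_\alpha]$ and the final model, and the tail cannot be consulted. The iterand $\dotQ_\alpha$ must be defined from $\Pp_\alpha$, while the tail is built afterwards out of exactly these copying decisions; moreover $\varphi$ may mention the coding predicate or the wellorder. So copying ``the forced value'' is circular. The tail is also far from homogeneous, since it codes specific reals into specific blocks.

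More decisively, suppose copies were made exactly for the least witnesses of pairs truly in the set, as you require. Then $\exists y\,\varphi(x,y,a)\iff\exists y\,\mathrm{Copy}(\varphi,x,a,y)$ for every $\Sigma^1_n$ formula, and the right-hand side is $\Sigma^1_3(a)$. Take $\varphi(x,y)\equiv x\in D$, where $D$ is the diagonal of a universal $\Pi^1_3$ set. Then $D$ would be boldface $\Sigma^1_3$, contradicting the hierarchy theorem for every $n\geq 4$.

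You also cannot fall back on Addison. With $2^{\aleph_0}=\aleph_3$, every wellorder of the reals has uncountable proper initial segments, so none is good. The minimality clause $\forall z<y\,\neg\varphi(x,z,a)$ is then an unbounded universal real quantifier over a $\Pi^1_n$ matrix, which has the wrong complexity.

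\textbf{The missing idea} is to copy only Shoenfield-absolute information and keep the real quantifiers of $\varphi$ outside the coding predicate. The paper proceeds as follows.
\begin{enumerate}[label=(\alph*)]
\item It writes $\theta_e\equiv\exists a_0\forall a_1\cdots\psi_e$, where $\psi_e$ is a $\Pi^1_2$ matrix (odd level) or a $\Sigma^1_2$ matrix (even level).
\item It ranks candidate triples $(x,y_\xi,a^\xi_0)$ by the canonical name ordering.
\item A $\Sigma$-stage handles tuples $(\#\psi_e,x,y_\xi,a^\xi_0,b_1,\ldots)$. Each $b_i$ externally compresses, via $\pi_\xi$, a $\xi$-sequence of moves, one for each earlier candidate; this is possible since $(2^\omega)^\xi=2^\omega$ for $\xi<\omega_3$.
\item Such a tuple is coded if the absolute test (O1) holds, and forbidden otherwise.
\item Minimality of $\xi$ is read off in the final model as $\neg\bigl[\forall a_1\exists a_2\cdots\operatorname{NotCoded}(\ldots)\bigr]$, which is $\Sigma^1_n$ (Lemmas~\ref{lem:later-excluded} and~\ref{lem:least-selected}).
\end{enumerate}

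\textbf{Your framework also needs repair.}
\begin{enumerate}[label=(\alph*)]
\item Since $L\models 2^{\aleph_1}=\aleph_2$, there is no $\omega_3$-sequence of distinct $\omega_1$-trees in $L$.
\item Since $\MA_{\aleph_1}$ makes every Aronszajn tree special, ground-model trees cannot be reserved against all MA posets.
\end{enumerate}
This is why FFDZ add $\aleph_3$ Jech trees by the preliminary mixed-support forcing, coded via stationary subsets of $\omega_2$. Each MA stage then uses only posets read below the current preliminary coordinate, and these cannot touch fresh blocks (Lemmas~\ref{lem:jech-subtlety} and~\ref{lem:no-accidental-codes}). Full MA follows because, by the $\omega_3$-chain condition, every small poset is eventually read below some stage.
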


Thus global \(\Sigma\)-uniformization is compatible with a universe which is far
from \(L\) in several respects.  The model satisfies Martin's Axiom and has
large continuum.  Moreover, \(\MA+\neg\CH\) gives regularity at the second
projective level: every boldface \(\Sigma^1_2\) set of reals is Lebesgue
measurable and has the Baire property; see, for example, \cite{Fremlin,Kechris}.
At the same time, the model has a lightface \(\Delta^1_3\) wellorder of the
reals, and hence is far from the determinacy universe.  The theorem therefore
places global same-level \(\Sigma\)-uniformization in a new forcing-axiom
environment, rather than in the classical constructible setting.

The forcing construction starts from the Fischer--Friedman--Zdomskyy machinery.
Their model of Martin's Axiom with \(2^{\aleph_0}=\aleph_3\) and a lightface
\(\Delta^1_3\) wellorder uses a matrix of Suslin trees, where information is
recorded by deciding, for each tree in a block, whether to specialize the tree or
to add a cofinal branch through it \cite{FISCHER2013763}.  We use the same
apparatus in two separate ways.  One family of blocks is reserved for the
projective wellorder.  A disjoint family of blocks is reserved for the
uniformization construction.

The uniformization part is based on a lightface \(\Sigma^1_3\) predicate
\(\PsiThree(z)\), whose positive instances are exactly the reals deliberately
inserted into the uniformization-reserved branch-versus-specialize blocks.  From
this predicate we build higher projective predicates by alternating real
quantifiers over the assertions that a tuple is coded, or is not coded, by
\(\PsiThree\).  At a uniformization stage the construction considers triples
\[
(x,y_\alpha,a^\alpha_0)
\]
ordered by the final projective wellorder and makes the least successful
candidate projectively recognizable.

The main point is that the construction does not directly define the
minimality condition.  A naive definition saying that no earlier \(y'\) works
would have the wrong projective complexity.  Instead, the iteration copies the
relevant matrix truth values into the coding predicate.  Once the bookkeeping has
produced a tuple of reals, the remaining matrix is only \(\Pi^1_2\) or
\(\Sigma^1_2\), and its truth value is absolute between the intermediate models
and the final extension.  External maps
\[
\pi_\alpha:(2^\omega)^\alpha\longrightarrow 2^\omega
\]
are used only as bookkeeping devices to compress the family of earlier failures
or later exclusions into one real.  The final projective definition does not
mention these maps; it only asks whether the corresponding compressed real has
been coded by \(\PsiThree\).

This produces a projective copy of the relevant infinitary minimality
information.  From outside the construction, the bookkeeping arranges a possibly
infinite conjunction, or dually a possibly infinite disjunction, of
\(\Pi^1_2\)- or \(\Sigma^1_2\)-matrix tests.  From inside the final model, this
information is read by the ordinary projective predicate \(\PsiThree\), and hence
has the correct complexity.

Unless explicitly stated otherwise, projective pointclasses are understood in the
boldface sense.  Lightface definability is always indicated explicitly.  The
Fischer--Friedman--Zdomskyy apparatus and its projective wellorder predicate are
recalled in Section~\ref{sec:ffdz}.  Section~\ref{sec:psi-three} isolates the
base predicate \(\PsiThree\).  Section~\ref{sec:compression} records the
external compression facts used by the bookkeeping.  The final iteration is
defined in Section~\ref{sec:iteration}; it partitions \(\omega_3\) into stages
for Martin's Axiom, for the wellorder, and for global
\(\Sigma\)-uniformization.  The copied projective predicates and the
uniformization proof are given in Sections~\ref{sec:uniformization} and
\ref{sec:uniformization-final}.  A cardinal-characteristic variant is recorded in
Section~\ref{sec:cardinal-variant}.

\section{The Fischer--Friedman--Zdomskyy apparatus}\label{sec:ffdz}

We recall the part of \cite{FISCHER2013763} needed for this article.  All forcing is
performed over $L$ until the preliminary forcing has been introduced.  After the
preliminary forcing, if $G_0\subseteq\Pp_0$ is generic over $L$, we write
\[
\Wground=L[G_0]
\]
for the resulting extension.  If $\Pp_\alpha$ is a later c.c.c. iteration over
$\Wground$ and $G_\alpha\subseteq\Pp_\alpha$ is generic, intermediate extensions
are denoted by $\Wground[G_\alpha]$.  We shall not use the notation $V_\alpha$ for
these extensions.

We also fix here, once and for all, a definable partition of the limit ordinals
below $\omega_3$ into three disjoint cofinal classes
\[
\mathcal L_{\mathrm{MA}},\qquad \mathcal L_{\mathrm{WO}},\qquad
\mathcal L_{\Sigma}.
\]
The definition is made in $L$ and is absolute to suitable initial segments in the
obvious sense: a suitable model uses its internal version of the same canonical
partition.  The class $\mathcal L_{\mathrm{WO}}$ is reserved for the FFDZ wellorder
blocks, while $\mathcal L_{\Sigma}$ is reserved for the uniformization coding
blocks.  The remaining c.c.c. bookkeeping for Martin's Axiom is placed on
$\mathcal L_{\mathrm{MA}}$ and on successor stages assigned to it.  The exact
choice of the partition is irrelevant; only definability, pairwise disjointness
and cofinality are used.

Fix a canonical sequence
\[
\vec S=\langle S_\xi:1<\xi<\omega_3\rangle
\]
of stationary subsets of $\omega_2\cap\cof(\omega_1)$, $\Sigma_1$-definable over
$L_{\omega_3}$ with parameter $\omega_2$.  Fix also a nicely definable
almost-disjoint family
\[
\vec B=\langle B_\xi:\xi<\omega_2\rangle
\]
of subsets of $\omega_1$, $\Sigma_1$-definable over $L_{\omega_2}$ with parameter
$\omega_1$.  For $\xi<\omega_3$, let $W_\xi$ denote the $L$-least subset of
$\omega_2$ coding $\xi$.  We also fix the definable almost-disjoint family
$\vec C=\langle C_{(\xi,\eta)}:\xi<\omega_1,\eta<\omega\cdot 3\rangle$ of subsets
of $\omega$ used at the final real-coding stages of the FFDZ construction.

\begin{definition}
A transitive model $M$ of $\ZF^-$ is \emph{suitable} if $\omega_2^M$ exists and
\[
\omega_2^M=\omega_2^{L^M}.
\]
Consequently $\omega_1^M=\omega_1^{L^M}$.
\end{definition}

Throughout, $\omega_1$-trees are identified with subsets of $\omega_1$ by the
$L$-least bijection between $\omega^{<\omega_1}$ and $\omega_1$.

\subsection{The preliminary mixed-support forcing}

For $0<\alpha<\omega_3$ and $n\in\omega$, let $\Kk^0_{\omega\cdot\alpha+n}$ be the
standard countably closed Jech forcing adding a Suslin tree
$T_{\omega\cdot\alpha+n}$ with countable conditions; see, for example,
\cite{Jech}.  Put
\[
\Kk_{0,\alpha}=\prod_{n\in\omega}\Kk^0_{\omega\cdot\alpha+n}
\]
with full support.

In the extension by $\Kk_{0,\alpha}$, the tree $T_{\omega\cdot\alpha+n}$ is coded
by killing stationarity of $S_{\omega_1\cdot(\omega\cdot\alpha+n)+\gamma}$ exactly
for $\gamma\in T_{\omega\cdot\alpha+n}$.  More precisely, let
\[
\Kk^1_{\alpha,n,\gamma}=\begin{cases}
\text{the forcing adding a club }C_{\omega_1\cdot(\omega\cdot\alpha+n)+\gamma}
\subseteq\omega_2\setminus S_{\omega_1\cdot(\omega\cdot\alpha+n)+\gamma},&
\gamma\in T_{\omega\cdot\alpha+n},\\
\text{the trivial forcing},&\gamma\notin T_{\omega\cdot\alpha+n}.
\end{cases}
\]
Let
\[
\Kk_{1,\alpha,n}=\prod_{\gamma<\omega_1}\Kk^1_{\alpha,n,\gamma},\qquad
\Kk_{1,\alpha}=\prod_{n\in\omega}\Kk_{1,\alpha,n},
\]
again with full support.

For a set $X$ of ordinals write
\[
0(X)=\{\eta:3\eta\in X\},\quad I(X)=\{\eta:3\eta+1\in X\},\quad
II(X)=\{\eta:3\eta+2\in X\}.
\]
Let $\chi:\omega_1\times\omega_2\to\omega_2$ be the fixed definable bijection.  In
$L^{\Kk_{0,\alpha}*\Kk_{1,\alpha}}$, let
$D_{\omega\cdot\alpha+n}\subseteq\omega_2$ code
\[
W_{\omega\cdot\alpha+n},\quad W_{\omega\cdot\alpha},\quad
\langle C_{\omega_1\cdot(\omega\cdot\alpha+n)+\gamma}:\gamma\in
T_{\omega\cdot\alpha+n}\rangle
\]
by requiring
\[
0(D_{\omega\cdot\alpha+n})=W_{\omega\cdot\alpha+n},\quad
I(D_{\omega\cdot\alpha+n})=W_{\omega\cdot\alpha},
\]
and
\[
II(D_{\omega\cdot\alpha+n})=
\chi\bigl(\{\langle\gamma,\eta\rangle:\gamma\in T_{\omega\cdot\alpha+n},\ 
\eta\in C_{\omega_1\cdot(\omega\cdot\alpha+n)+\gamma}\}\bigr).
\]
Choose $Z_{\omega\cdot\alpha+n}\subseteq\omega_2$ with even part
$D_{\omega\cdot\alpha+n}$ and with the usual localization property: if
$\beta<\omega_2$ is $\omega_2^M$ for a suitable $M$ and
$Z_{\omega\cdot\alpha+n}\cap\beta\in M$, then $\beta$ lies in the relevant club of
elementary submodels used in the FFDZ construction.

Let $\psi_{\mathrm{stat}}(\omega_1,\omega_2,Z,T,Z')$ be the $\Sigma_1$ formula
saying that $Z,Z'$ decode, through $\vec B$, canonical ordinal codes and clubs
disjoint from the appropriate stationary sets, as in \cite[Section 2]{FISCHER2013763}.
Let $\varphi_{\mathrm{loc}}(\omega_1,\omega_2,X,T,X')$ assert that, using
$\vec B$, $X$ and $X'$ almost-disjointly code $Z$ and $Z'$ such that
$\psi_{\mathrm{stat}}(\omega_1,\omega_2,Z,T,Z')$ holds.

The third component $\Kk_{2,\alpha,n}$ adds
$X_{\omega\cdot\alpha+n}\subseteq\omega_1$ almost-disjointly coding
$Z_{\omega\cdot\alpha+n}$ by conditions
$(s,s^*)\in[\omega_1]^{<\omega_1}\times[Z_{\omega\cdot\alpha+n}]^{<\omega_1}$,
ordered by end-extension of $s$ and the requirement that new points avoid
$B_\xi$ for $\xi\in s^*$.  Let
\[
\Kk_{2,\alpha}=\prod_{n\in\omega}\Kk_{2,\alpha,n}
\]
with full support.

Finally, define the localization forcing $\Ll(X,T,X')$ to consist of all
functions $r:|r|\times 3\to 2$ such that $|r|$ is a countable limit ordinal and:
\begin{enumerate}[label=(\arabic*)]
\item for $\gamma<|r|$, $\gamma\in X$ iff $r(\gamma,0)=1$;
\item for $\gamma<|r|$, $\gamma\in X'$ iff $r(\gamma,1)=1$;
\item for every $\gamma\leq |r|$ and every suitable model $M$ containing
$r\upharpoonright(\gamma\times 3)$, the model $M$ satisfies
\[
\varphi_{\mathrm{loc}}(\omega_1,\omega_2,X\cap\gamma,T\cap\gamma,X'\cap\gamma).
\]
\end{enumerate}
The order is end-extension.  Put
\[
\Kk_{3,\alpha,n}=\Ll(X_{\omega\cdot\alpha+n},T_{\omega\cdot\alpha+n},
X_{\omega\cdot\alpha})
\]
for $\alpha>0$, with the obvious trivial definition at $\alpha=0$, and set
\[
\Kk_{3,\alpha}=\prod_{n\in\omega}\Kk_{3,\alpha,n}.
\]
Let
\[
\Kk_\alpha=\Kk_{0,\alpha}*\Kk_{1,\alpha}*\Kk_{2,\alpha}*\Kk_{3,\alpha}.
\]

For $p\in\prod_{\alpha\in I}\Kk_\alpha$, define
\[
\supp_\omega(p)=\{\langle i,\alpha\rangle:i\in\{0,2,3\},\ p_{i,\alpha}\neq\mathbf 1\}
\]
and
\[
\supp_{\omega_1}(p)=\{\langle 1,\alpha,n,\zeta\rangle:p_{1,\alpha,n,\zeta}\neq\mathbf 1\}.
\]
A condition has \emph{mixed support} if $|\supp_\omega(p)|\leq\omega$ and
$|\supp_{\omega_1}(p)|\leq\omega_1$.  Let $\Pp_0$ be the mixed-support suborder of
$\prod_{\alpha<\omega_3}\Kk_\alpha$.

\begin{theorem}[Fischer--Friedman--Zdomskyy]\label{thm:ffdz-preliminary}
The forcing $\Pp_0$ is $\omega$-distributive, has the $\omega_3$-chain condition,
preserves cardinals, and preserves the relevant stationary sets in the following
precise sense: if $\gamma\notin T_{\omega\cdot\alpha+n}$, then
$S_{\omega_1\cdot(\omega\cdot\alpha+n)+\gamma}$ remains stationary in
$L^{\Pp_0}$.
\end{theorem}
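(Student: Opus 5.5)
The plan is to follow the FFDZ argument and isolate four properties of the mixed-support product $\Pp_0$: $\omega$-distributivity, the $\omega_3$-chain condition, cardinal preservation, and stationarity of the uncoded sets $S_{\omega_1\cdot(\omega\cdot\alpha+n)+\gamma}$.

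\textbf{Chain condition.} This is the easiest part. Work in $L$, where $\GCH$ holds. Every condition has support of size at most $\omega_1$, and each component of the support lives in a structure of size at most $\omega_2$. A $\Delta$-system argument for families of $\omega_3$ many conditions with supports of size $\aleph_1$ applies, since $\omega_2^{\omega_1}=\omega_2<\omega_3$. On the root, compatibility can be arranged by pigeonhole over the $\omega_2$ possible restrictions. This gives the $\omega_3$-c.c.

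\textbf{Distributivity.} I would factor $\Pp_0$ componentwise.
\begin{itemize}
\item The $\Kk_0$-part is countably closed under the countable-support product.
\item The $\Kk_2$-part is countably closed given its ground model.
\item The club-shooting part $\Kk_1$ is not countably closed, but it is $\omega$-distributive and in fact $\omega_1$-distributive. The reason is that the forcing adding a club through $\omega_2\setminus S$, with $S\subseteq\cof(\omega_1)$, has closed conditions avoiding $S$. Its $\omega_1$-support products are $<\omega_2$-strategically closed on a stationary set of countable-cofinality limits.
\item The localization forcings $\Ll(X,T,X')$ are shown $\omega$-distributive by the standard suitable-model argument. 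Given a condition $r$ and dense sets $\langle D_k:k<\omega\rangle$, take a countable elementary submodel $N$ of a large $L_\theta[G]$, build a descending sequence through the $D_k\cap N$ of length $\omega$, and take the union $r^*$ with $|r^*|=\omega_1\cap N$. The union is a condition: any suitable $M$ containing $r^*$ sees $\varphi_{\mathrm{loc}}$ at $\omega_1\cap N$. This is exactly what the localization property of $Z$ guarantees, since the relevant $\beta=\omega_2^M$ lies in the club of elementary submodels and the clubs $C$ and the codes $W$ witness $\psi_{\mathrm{stat}}$ inside $M$.
\end{itemize}

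\textbf{Combining distributivity.} For the full mixed-support product, the plan is to run the fusion simultaneously. Fix a countable elementary $N$ and build an $\omega$-sequence of conditions in $N$ meeting the dense sets. Take coordinatewise limits on the countably many $\supp_\omega$ coordinates that appear, via the closure or the localization argument above. On the $\supp_{\omega_1}$ coordinates, take unions of club conditions, whose new top point $\sup(\omega_2\cap N)$ has countable cofinality and so avoids each $S\subseteq\cof(\omega_1)$.

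\textbf{Cardinal preservation.} $\omega_1$ is preserved by $\omega$-distributivity. Cardinals $\geq\omega_3$ are preserved by the $\omega_3$-c.c. For $\omega_2$, I would show $\Pp_0$ is $\omega_1$-distributive, or at least does not collapse $\omega_2$, by the same fusion argument with $\omega_1$-length sequences. Limit points of cofinality $\omega_1$ threaten the club conditions, so I would build along an internally approachable chain of length $\omega_1$ whose $\omega_2$-supremum lies outside every relevant $S_\xi$ that is being killed. This works because we only kill $S_\xi$ for $\xi$ coding tree members, and those sets are pairwise disjoint modulo bounded sets. Alternatively, one shows $\omega_2$-c.c. fails harmlessly and argues via a quotient.

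\textbf{Main obstacle: stationarity.} Given $\gamma\notin T_{\omega\cdot\alpha+n}$, fix a name $\dot C$ for a club in $\omega_2$ and a condition $p$. Choose an elementary chain of models $N_i$, $i<\omega_1$, of size $\omega_1$, with $\delta=\sup(N\cap\omega_2)\in S_{\omega_1\cdot(\omega\cdot\alpha+n)+\gamma}$. This is possible since that set is stationary in $L$ and almost disjoint from all the sets being killed. Build a descending sequence of length $\omega_1$ deciding $\dot C$ cofinally below $\delta$. At limit stages the club conditions need closure points $\leq\delta$ outside their own $S$'s, and $\delta$ itself only lies in $S_{\omega_1\cdot(\omega\cdot\alpha+n)+\gamma}$, which is not shot through. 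The lower bound then forces $\delta\in\dot C$.

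The delicate point is handling the countably-closed but not $\omega_1$-closed parts, namely the localization forcings, along the $\omega_1$-length construction. I expect to resolve this by a factor argument: the localization and almost-disjoint coding parts are c.c.c.-like, or $\omega_1$-sized, relative to the rest, hence they preserve stationary subsets of $\omega_2\cap\cof(\omega_1)$. So it suffices to run the fusion on $\Kk_0*\Kk_1$ alone.
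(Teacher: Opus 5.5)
Your $\omega_3$-c.c.\ sketch and the overall shape of the $\omega$-distributivity argument are fine. Both steps that need constructions of length $\omega_1$, however, break down.

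$\Pp_0$ is not $\omega_1$-distributive: it adds new subsets of $\omega_1$ (the trees $T_{\omega\cdot\alpha+n}$, the codes $X_{\omega\cdot\alpha+n}$, the localizing functions). Correspondingly, a descending $\omega_1$-sequence which strengthens a Jech coordinate $\Kk_{0,\alpha}$ (or a $\Kk_2$- or $\Kk_3$-coordinate) cofinally often has no lower bound, because those conditions are countable. So your plan for $\omega_2$ fails. The stationarity fusion also fails even when run ``on $\Kk_0*\Kk_1$ alone'': deciding $\dot C$ cofinally below $\delta$ forces you to keep strengthening Jech coordinates, not just localization ones.

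The factor argument does not rescue this. The $\Kk_2$/$\Kk_3$ parts are not c.c.c.; at best their countable-support products are $\omega_2$-c.c.\ under $\CH$. More seriously, $\Pp_0$ is a mixed-support product of the iterations $\Kk_{0,\alpha}*\Kk_{1,\alpha}*\Kk_{2,\alpha}*\Kk_{3,\alpha}$. Their entries are names over their own coordinate only, with $\Kk_{1,\alpha}$ defined from the tree of $\Kk_{0,\alpha}$ and $\Kk_{2,\alpha}$ from the clubs of $\Kk_{1,\alpha}$. Thus $\Pp_0$ is not (densely) the iteration ``product of the $\Kk_0$'s, then product of the $\Kk_1$'s, then the rest''. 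An $\omega_1$-sequence of club conditions built in an intermediate model need not come from a sequence of $\Kk_{0,\alpha}$-names in $L$, so its limit need not be a condition.

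The missing ingredient is what the paper's cited proof calls the mixed-support genericity construction with the relation $\le^*$. Roughly, the long construction proceeds by extensions which only strengthen the $\omega_1$-supported club-shooting coordinates, while the countably supported coordinates are handled separately. On the club coordinates a lower bound exists by adding $\delta=\sup(N\cap\omega_2)\in S_{\omega_1\cdot(\omega\cdot\alpha+n)+\gamma}$, which, given pairwise bounded intersections, avoids every killed $S_\xi$ with $\xi\in N$.

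Once stationarity is established, $\omega_2$-preservation is automatic. If $\omega_2^L$ acquired cofinality $\omega_1$, then by $\omega$-distributivity the limit points of a club of order type $\omega_1$ would already have countable cofinality in $L$. Every stationary subset of $\omega_2\cap\cof(\omega_1)$ would then die.

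Finally, in the localization step $N$ must be the \emph{least} countable elementary submodel (a Skolem hull). Then $\delta=\omega_1\cap N$ is collapsed in $L$ immediately above the height of the transitive collapse $\bar N$. Consequently every suitable $M$ with $\omega_1^M=\delta$ has $\omega_2^M\le\omega_2^{\bar N}$, and decodes from $X\cap\delta$ only an initial segment of the collapse of $Z$, to which the localization property applies. For an arbitrary $N$, a suitable $M$ with larger $\omega_2^M$ also tests $X\cap\delta$ against almost-disjoint sets above $\omega_2^{\bar N}$. Since $X\cap\delta$ carries no information about those sets, $\varphi_{\mathrm{loc}}$ can fail.

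A minor point: the club-shooting posets are countably closed, contrary to your remark, and your combining step actually uses this.
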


\begin{proof}
This is the preliminary-stage analysis of \cite[Propositions 2.2, 2.3 and
2.6]{FISCHER2013763}.  The distributivity proof uses the localization component
$\Kk_3$ and the least-countable-elementary-submodel argument.  The preservation
of stationarity for $\gamma\notin T_{\omega\cdot\alpha+n}$ is obtained by the
mixed-support genericity construction with the relation $\leq^*$.
\end{proof}

\subsection{The projective wellorder predicate}\label{subsec:wo-predicate}

The FFDZ coding stage defines a finite-support c.c.c. iteration over $\Wground$.
At wellorder stages, a pair of reals is encoded by specializing some trees in a
fresh $\omega$-block and adding branches through the remaining trees.  We recall
the corresponding projective definition, since it is the prototype for the base
coding predicate used later.

Fix a recursive injection
\[
\Delta_{\mathrm{WO}}:2^\omega\longrightarrow\mathcal P(\omega)
\]
which is used in the FFDZ wellorder coding; for a pair $x,y$ write $x*y$ for the
usual recursive join and use $\Delta_{\mathrm{WO}}(x*y)$ as the block pattern.
Let $\mathcal L_{\mathrm{WO}}\subseteq\Lim(\omega_3)$ be the definable class of
limit stages reserved for wellorder coding.

Define $\WO(x,y)$ to be the assertion that there is a real $R$ such that for
every countable suitable model $M$ with $R,x,y\in M$, there is a limit ordinal
$\bar\alpha\in\mathcal L_{\mathrm{WO}}^M$ such that, for every $n\in\omega$, the
set
\[
T^M_{\bar\alpha,n}=\{\gamma<\omega_1^M:
S^M_{\omega_1\cdot(\omega\cdot\bar\alpha+n)+\gamma}
\text{ is nonstationary in }M\}
\]
is an $\omega_1^M$-tree, and $M$ sees that
\[
\begin{array}{rcl}
n\in\Delta_{\mathrm{WO}}(x*y)&\Longrightarrow& T^M_{\bar\alpha,n}\text{ is specialized},\\
n\notin\Delta_{\mathrm{WO}}(x*y)&\Longrightarrow& T^M_{\bar\alpha,n}\text{ has a cofinal branch}.
\end{array}
\]
Here all references to $\vec S$ and to the coding of trees are interpreted using
the canonical definitions inside $M$.

\begin{lemma}\label{lem:wo-sigma13}
The relation $\WO(x,y)$ is $\Sigma^1_3$.  In the FFDZ extension, $\WO(x,y)$ holds
if and only if $x$ precedes $y$ in the canonical stage-of-appearance wellorder of
the reals.  Consequently the wellorder is lightface $\Delta^1_3$.
\end{lemma}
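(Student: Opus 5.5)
The proof has three parts: a complexity count, the two directions of the equivalence in the FFDZ extension, and the derivation of $\Delta^1_3$. Throughout I follow the FFDZ argument of \cite[Section 3]{FISCHER2013763}.

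\textbf{Complexity.} Write $\WO(x,y)$ as $\exists R\,\forall M\,[\ldots]$, with $M$ ranging over reals coding countable transitive models. Being a code for a countable well-founded model is $\Pi^1_1$. Suitability of $M$ and containing $R,x,y$ are arithmetical in the code. The matrix, asking for some $\bar\alpha\in\mathcal L_{\mathrm{WO}}^M$ with the specialization/branch pattern computed inside $M$, is also arithmetical in the code of $M$. So the clause ``for every countable suitable $M$ with $R,x,y\in M$, the matrix holds'' has the form $\forall M\,(\Pi^1_1\to\text{arithmetical})$, which is $\Pi^1_2$. Hence $\WO$ is lightface $\Sigma^1_3$, and no parameters occur beyond $x,y$.

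\textbf{Forward direction.} Suppose $x$ precedes $y$. By the FFDZ coding stage there is a limit stage $\alpha\in\mathcal L_{\mathrm{WO}}$ at which the block $\langle T_{\omega\cdot\alpha+n}:n\in\omega\rangle$ is specialized or branched according to $\Delta_{\mathrm{WO}}(x*y)$. After this, the localization and almost-disjoint coding stages produce a real $R$ such that every suitable countable $M$ containing $R$ correctly decodes:
\begin{itemize}
\item the ordinal $\alpha$, via $W_{\omega\cdot\alpha}$ and the sets $X_{\omega\cdot\alpha+n}$;
\item the trees, via the clubs killing stationarity of the relevant $S$'s;
\item the specializing functions or branches, which are themselves coded into $R$.
\end{itemize}
The key input is the localization property $\Ll$, clause (3). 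Reflecting it down to countable $M$ gives that $M$ satisfies $\varphi_{\mathrm{loc}}$ restricted to $\omega_1^M$, so $T^M_{\bar\alpha,n}=T_{\omega\cdot\alpha+n}\cap\omega_1^M$ with $\bar\alpha$ the collapse of $\alpha$. This yields $\WO(x,y)$.

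\textbf{Converse (main obstacle).} Suppose $\WO(x,y)$ holds via $R$. Take $\theta$ large and a countable elementary submodel of $L_\theta[G]$ containing $R$, and let $M$ be its transitive collapse. Then $M$ is suitable and decodes $T^M_{\bar\alpha,n}$ correctly as the initial segments of the genuine trees $T_{\omega\cdot\alpha+n}$, where $\alpha$ is the uncollapse of $\bar\alpha$. This correctness uses Theorem~\ref{thm:ffdz-preliminary}: an $S$ stays stationary exactly when $\gamma\notin T$. It also uses elementarity, so that nonstationarity in $M$ reflects real nonstationarity. Because $M$ is elementary, $M$ seeing specialization or a branch reflects up, and the pattern $\Delta_{\mathrm{WO}}(x*y)$ is realized on the real block at $\alpha$. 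The delicate point is that the iteration never accidentally both specializes a tree and adds a branch, and never creates this pattern elsewhere. Here I would invoke the FFDZ fact that, in a c.c.c. finite-support iteration, a Suslin tree of the matrix that is neither specialized nor branched by the bookkeeping remains Suslin. Specializing and adding a branch are incompatible, and Suslin trees have no branches, so the pattern pins down the pair coded at stage $\alpha$. Injectivity of $\Delta_{\mathrm{WO}}$ then gives that $x$ precedes $y$.

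\textbf{$\Delta^1_3$.} The wellorder is total, so $\neg\WO(x,y)\iff x=y\vee\WO(y,x)$. This makes the complement $\Sigma^1_3$ as well, and the wellorder is lightface $\Delta^1_3$.
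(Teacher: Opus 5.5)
Your proposal is correct and follows essentially the paper's route, which runs the argument of \cite[Lemma 2.8]{FISCHER2013763} with the block index restricted to $\mathcal L_{\mathrm{WO}}$: an $\exists R\,\forall M$ complexity count, the coding real $R$ plus localization for the forward direction, stationarity preservation and freshness of blocks for the converse, and totality for $\Delta^1_3$. Minor points: the elementary submodel should also contain $x,y$, and $\alpha\in\mathcal L_{\mathrm{WO}}$ (so not a uniformization block) follows by elementarity; the Suslin-tree fact should be used in its temporal form (fresh trees stay Suslin until their block is deliberately acted upon, Lemma~\ref{lem:no-accidental-codes}), since the final model satisfies $\MA$ and has no Suslin trees at all.
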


\begin{proof}
The displayed definition has the form $\exists R\,\forall M\,\vartheta(R,x,y,M)$,
where $M$ ranges over reals coding countable suitable transitive models and
$\vartheta$ is arithmetical in the code for $M$ together with the canonical
$L$-definitions.  Hence it is $\Sigma^1_3$.

The correctness is exactly the argument of \cite[Lemma 2.8]{FISCHER2013763},
with the harmless change that the formula is explicitly restricted to the
wellorder-reserved class $\mathcal L_{\mathrm{WO}}$.  If the pair $(x,y)$ is
coded at a wellorder stage, the final almost-disjoint real $R$ carries the
specializing functions, branches and localized tree information into every
countable suitable model, so $\WO(x,y)$ holds.  Conversely, if $\WO(x,y)$ holds,
the stationarity preservation theorem from the preliminary forcing and the
factorization analysis of the later c.c.c. iteration imply that the decoded block
must be a genuine wellorder block.  The branch-versus-specialize pattern then
recovers exactly $\Delta_{\mathrm{WO}}(x*y)$, and hence the pair coded at that
stage is $(x,y)$.

Since $x<_{G}y$ is $\WO(x,y)$ and $y<_{G}x$ is $\WO(y,x)$, the complement of the
strict wellorder is the union of equality and the reversed relation.  Thus the
wellorder is also $\Pi^1_3$, and hence $\Delta^1_3$.
\end{proof}

The following lemma spells out the preservation point used in the proof of the
FFDZ coding stage.  In \cite[Claim~2.7]{FISCHER2013763}, at a limit coding stage
\(\alpha=\omega\cdot\beta\), the forcing is factored as
\[
  \Pp_\alpha=(\Pp_{0,<\alpha}*\bar{\Pp}_\alpha)\times\Pp_{0,\geq\alpha},
\]
and it is concluded that the fresh trees \(T_{\omega\cdot\alpha+n}\), which
come from the tail preliminary factor, remain Suslin after the c.c.c. coding
iteration.  This conclusion is correct, but it should not be read as saying that
Jech's forcing remains countably closed after passing to the intermediate c.c.c.
extension.  Rather, one commutes the independent factors and uses the countable
closure of the Jech factor in the original ground model, together with the c.c.c.
of the later factor.  The lemma below is the precise product form of this
argument.  It is the only additional point about the FFDZ preservation analysis
that will be used later; the rest of the mixed-support analysis is exactly that
of \cite[Section~2, especially Propositions~2.2, 2.3, 2.6 and Claim~2.7]
{FISCHER2013763}.

\begin{lemma}\label{lem:jech-subtlety}
Let \(\mathbb J\) be Jech's forcing for adding a Suslin tree, and let
\(\dot T\) be the canonical \(\mathbb J\)-name for the generic tree.  If
\(\mathbb P\) is a c.c.c. forcing notion belonging to the ground model, then
\[
  \mathbb P\times\mathbb J\Vdash
  ``\dot T\text{ is a Suslin tree}''.
\]
Equivalently, after forcing with \(\mathbb P\), the old Jech forcing
\(\mathbb J\), although it need not remain countably closed, still forces its
canonical generic tree to be Suslin.  In the FFDZ notation this is applied with
\(\mathbb J\) one of the factors \(\Kk^0_{\omega\cdot\alpha+n}\) in the fresh
tail block and with \(\mathbb P\) the c.c.c. coding part which is supported below
\(\alpha\).
\end{lemma}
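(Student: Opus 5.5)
The plan is to use the product lemma to swap the order of the two forcings. Since $\mathbb P\times\mathbb J$ is a product of two ground-model posets, forcing with it is the same as forcing first with $\mathbb J$ and then with $\mathbb P$, where $\mathbb P$ is interpreted in $V^{\mathbb J}$. So it suffices to prove two things. First, in $V^{\mathbb J}$ the generic tree $T$ is Suslin, which is Jech's theorem. Second, $\mathbb P$ is still c.c.c. in $V^{\mathbb J}$ and adds no uncountable antichain to $T$.

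\textbf{Step 1: $\mathbb P$ remains c.c.c. after $\mathbb J$.} Since $\mathbb J$ is countably closed, it adds no new countable sequences. Take a $\mathbb J$-name $\dot A$ for an uncountable antichain of $\mathbb P$. Below any condition, build a descending $\omega_1$-sequence of $\mathbb J$-conditions $q_\xi$, where $q_\xi$ decides the $\xi$-th element $p_\xi$ of $\dot A$. At limit stages, countable closure of $\mathbb J$ in $V$ lets the construction continue. The result is an uncountable set $\{p_\xi\}\in V$. Any two of its elements are forced by a common extension to be incompatible, and incompatibility in $\mathbb P$ is absolute. So $\{p_\xi\}$ is an antichain in $V$, contradicting the c.c.c. of $\mathbb P$ in $V$.

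\textbf{Step 2: $T$ stays Suslin in $V^{\mathbb J\times\mathbb P}$.} This is the main obstacle. I would argue directly in $V$ with the product, avoiding any appeal to closure of $\mathbb J$ in $V^{\mathbb P}$. Suppose $(p,q)$ forces that $\dot A$ is a maximal antichain of $\dot T$. Let $\theta$ be large and build a continuous $\in$-chain of countable elementary submodels $N_\delta\prec H_\theta$, together with a descending sequence of $\mathbb J$-conditions. Each condition is an initial segment of the tree of countable height.

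At stage $\delta=N_\delta\cap\omega_1$, take an $(N_\delta,\mathbb J)$-generic descending $\omega$-sequence in $N_\delta$. Its union is a tree $t_\delta$ of height $\delta$. Extend $t_\delta$ to level $\delta$ in Jech's manner. For each node of $t_\delta$ and each of countably many antichain-relevant data, put a branch on top of it. The point is to exploit the c.c.c. of $\mathbb P$: for each $x\in t_\delta$, the set of $\mathbb P$-conditions deciding "some element of $\dot A$ below $x$" has a countable maximal antichain. That antichain lies in $N_\delta$ by elementarity, since $\mathbb P\in N_\delta$ and the c.c.c. holds there.

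Consequently the statement "$\dot A\cap t_\delta$ is predense" is decided in $N_\delta$ using only countably many $\mathbb P$-names. Each cofinal branch chosen at level $\delta$ should meet, for every $p'\in\mathbb P\cap N_\delta$, a node that $p'$ (or something compatible with it) forces into $\dot A$. There are only countably many such requirements, so each chosen branch can be built by a diagonal $\omega$-construction inside $t_\delta$. Then every node at level $\delta$ lies above an element of $\dot A\cap t_\delta$ in the extension, by a density argument in $\mathbb P$ using the countable maximal antichains. Hence $\dot A\subseteq t_\delta$ is forced, and $\dot A$ is countable.

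The delicate point is that the branches must be chosen in $V$, before $\mathbb P$ is decided. I expect to handle this by making each top node $(N_\delta,\mathbb P)$-hitting for every $\mathbb P$-name of an element of $\dot A$ lying in $N_\delta$, using the countable predense subsets of $\mathbb P$ that live in $N_\delta$. The final conclusion is then the standard Suslinity argument for the product.
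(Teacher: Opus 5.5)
Your Step~1 (Easton's lemma) is correct, but Step~2 never uses it. All of the content is in Step~2, and the decisive mechanism is missing there. Since $\dot A$ is a $\mathbb P\times\mathbb J$-name, a $\mathbb P$-condition by itself in general decides nothing about which nodes lie in $\dot A$. The set you describe (``$\mathbb P$-conditions deciding some element of $\dot A$ below $x$'') has one of two problems. Either it is defined from your sequence $\langle t_n\rangle$, which is not in $N_\delta$, so elementarity does not put its antichain into $N_\delta$. Or the Jech conditions witnessing the decisions vary with the $\mathbb P$-condition and need not be compatible with your sequence.

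What the proof requires is a single Jech condition that works for every member of a maximal antichain of $\mathbb P$ at once. (Your Step~1 could supply this. By Easton's lemma, a nice $\mathbb P$-name in $V^{\mathbb J}$ for an element of $\dot A$ comparable with $x$ is a countable set of pairs from $V$, hence lies in $V$ and is decided by one Jech condition.) The paper isolates this as a product-fusion fact. For $D\subseteq\mathbb P\times\mathbb J$ dense open and any $(p,t)$, there are $t'\leq t$ and a maximal antichain $A$ below $p$ with $A\times\{t'\}\subseteq D$. This is obtained by building the antichain and a decreasing Jech sequence together. The recursion stops at a countable stage by the c.c.c.\ of $\mathbb P$ in $V$, and limits use the countable closure of $\mathbb J$ in $V$. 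Applied node by node during the construction, it gives, for each chosen branch $b$, a maximal antichain $A_b$ and nodes $y_{b,q}\in b$ with $(q,t^*)\Vdash y_{b,q}\in\dot A$ for all $q\in A_b$. The final step then picks, for an arbitrary $r\leq p_0$, some $q\in A_b$ compatible with $r$.

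Your substitute requirement cannot play this role. You ask that each branch meet, for every $p'\in\mathbb P\cap N_\delta$, a node forced into $\dot A$ by something compatible with $p'$. This does not make the witnessing conditions predense in $\mathbb P$. In $\operatorname{Fn}(\omega_1\times\omega,2)$, the extensions $p'\cup\{((\delta,0),0)\}$ for $p'\in N_\delta$ are such witnesses, yet $\{((\delta,0),1)\}$ is incompatible with all of them. So a condition outside $N_\delta$ may still force a new element of $\dot A$ above your top node.

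Nor can a diagonal choice of branches inside $t_\delta$, made after the sequence is finished, create the needed decisions. The witnesses for different members of an antichain may lie in incompatible directions, so they have to be placed on a single chain while the Jech conditions are being built.

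Your outline can be repaired. Let the $(N_\delta,\mathbb J)$-generic sequence meet, for each node $x$, the set of $t$ for which there are a maximal antichain $A$ below $p$, a node $x'\geq x$ of $t$, and nodes $y_a\leq x'$ with $(a,t)\Vdash y_a\in\dot A$ for all $a\in A$. However, the density of this set is exactly the fusion recursion above, keeping the chain at the top level and extending it at limits. Your final paragraph only says you ``expect to handle'' this point, so the core of the lemma is left unproved.
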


\begin{proof}
We write conditions in \(\mathbb P\times\mathbb J\) as pairs \((p,t)\), with
stronger conditions smaller.  This is exactly the commutation argument behind
\cite[Claim~2.7]{FISCHER2013763}.  The proof uses the countable closure of
\(\mathbb J\) only in the ground model.  We never claim that \(\mathbb J\)
remains countably closed after forcing with \(\mathbb P\).

We first record the product-fusion fact which will be used repeatedly.  Let
\(D\subseteq\mathbb P\times\mathbb J\) be dense open and let
\((p,t)\in\mathbb P\times\mathbb J\).  Then there are a condition
\(t'\leq_{\mathbb J}t\) and a maximal antichain \(A\subseteq\mathbb P\) below
\(p\) such that
\[
  (q,t')\in D \qquad\text{for every }q\in A.
\]
Indeed, recursively build an antichain \(\langle q_\xi:\xi<\eta\rangle\) below
\(p\) and a decreasing sequence \(\langle t_\xi:\xi\leq\eta\rangle\) in
\(\mathbb J\).  If the antichain constructed so far is not maximal below
\(p\), choose \(q_\xi\leq p\) incompatible with all earlier \(q_\zeta\)'s.  By
density of \(D\), strengthen \((q_\xi,t_\xi)\) to some \((q'_\xi,t_{\xi+1})\in D\),
and replace \(q_\xi\) by \(q'_\xi\).  At limit stages take a lower bound in
\(\mathbb J\).  Since \(\mathbb P\) is c.c.c., the recursion stops at a countable
stage.  A final lower bound in \(\mathbb J\) gives \(t'\), and openness of \(D\)
gives the displayed conclusion.

Now suppose, toward a contradiction, that some \((p_0,t_0)\) forces that
\(\dot A\) is an uncountable antichain in \(\dot T\).  Strengthening
\((p_0,t_0)\), we may assume that \(\dot A\) is forced to be a maximal antichain
of \(\dot T\).  Fix a sufficiently large regular \(\Theta\) and a countable
\(M\prec H_\Theta\) containing
\[
  \mathbb P,\mathbb J,\dot T,\dot A,p_0,t_0.
\]
Let \(\delta=M\cap\omega_1\).  We construct in the ground model a decreasing
sequence
\[
  t_0\geq_{\mathbb J} t_1\geq_{\mathbb J}t_2\geq_{\mathbb J}\cdots
\]
and, together with it, a countable set \(\mathcal B\) of branches through the
increasing union of the trees \(t_n\).  The bookkeeping of the construction
lists all nodes which occur in the successive conditions and all dense sets from
\(M\) relevant to deciding intersections with \(\dot A\).  When a node \(x\) is
considered, we use the preceding product-fusion fact, applied to the dense set
of conditions which decide a member of \(\dot A\) compatible with an extension of
\(x\).  This produces a lower Jech condition and a maximal antichain in the
\(\mathbb P\)-coordinate below \(p_0\).  Since each such antichain is countable
and only countably many nodes are treated, the construction remains countable;
at limit steps we use the ground-model countable closure of \(\mathbb J\).

Let \(t^*\) be a lower bound for the constructed sequence.  We may arrange the
construction so that every node of \(t^*\) lies on a branch \(b\in\mathcal B\),
and for each \(b\in\mathcal B\) there are a maximal antichain
\(A_b\subseteq\mathbb P\) below \(p_0\) and nodes \(y_{b,q}\in b\), for
\(q\in A_b\), such that
\[
  (q,t^*)\Vdash y_{b,q}\in\dot A.
\]
End-extend \(t^*\) in the usual Jech manner by adding one new node at level
\(\delta\) above each branch \(b\in\mathcal B\).  Call the resulting condition
\(t^+\).  We claim that
\[
  (p_0,t^+)\Vdash \dot A\subseteq t^* .
\]
Suppose not.  Then some \((r,s)\leq(p_0,t^+)\) forces that a node
\(z\in\dot A\setminus t^*\).  Let \(u\) be the new level-\(\delta\) predecessor
of \(z\) in \(t^+\), and let \(b\in\mathcal B\) be the branch below \(u\).  Pick
\(q\in A_b\) compatible with \(r\), and let \(r'\leq r,q\).  Since
\((q,t^*)\Vdash y_{b,q}\in\dot A\), the condition \((r',s)\) forces both
\(y_{b,q}\in\dot A\) and \(z\in\dot A\).  But \(y_{b,q}<_{\dot T}u\leq_{\dot T}z\),
contradicting that \(\dot A\) is an antichain.  Hence \((p_0,t^+)\) forces
\(\dot A\subseteq t^*\), and therefore forces \(\dot A\) to be countable.

The generic tree has no cofinal branch as well.  In Jech's forcing the generic
tree is normal and splitting by dense requirements depending only on the
\(\mathbb J\)-coordinate, so the same remains forced by the product.  A cofinal
branch through a normal splitting \(\omega_1\)-tree yields an uncountable
antichain by choosing, along the branch, incompatible splitting successors at
strictly increasing levels.  Since the product adds no uncountable antichain to
\(\dot T\), it adds no cofinal branch through \(\dot T\).

Finally, the product plainly forces that \(\dot T\) has height \(\omega_1\) and
countable levels, because each level is a countable level of some Jech condition
once it appears.  Thus \(\mathbb P\times\mathbb J\) forces \(\dot T\) to be
Suslin.
\end{proof}

It is useful to record the exact form in which the preceding lemma will be used.
This formulation is deliberately stated in terms of a local side forcing, rather
than in terms of the full iteration which will only be defined later.  Fix a
limit block \(\alpha\).  We say that a c.c.c. forcing \(\mathbb R\) is
\emph{supported below \(\alpha\)} if it belongs to the extension generated by the
preliminary coordinates below \(\alpha\), together with the already constructed
c.c.c. quotient below \(\alpha\), and if its definition uses no branch or
specialization forcing for any of the trees
\[
   T_{\omega\cdot\alpha+n}\qquad(n<\omega)
\]
and no almost-disjoint coding real attached to this block.  Equivalently, in the
FFDZ factorization at a fresh limit block, \(\mathbb R\) is a complete subforcing
of the c.c.c. side quotient which lives over
\(\Pp_{0,<\alpha}\), while the whole fresh preliminary tail
\(\Pp_{0,\geq\alpha}\) is still untouched.  This is the form in which all later
MA stages, wellorder stages below \(\alpha\), and uniformization stages below
\(\alpha\) will interact with the block \(\alpha\).  When a stage is assigned to
\(\alpha\) itself, it is no longer a side forcing in this sense; it is precisely
the deliberate coding action on the block.

\begin{lemma}\label{lem:no-accidental-codes}
Let \(\alpha\in\mathcal L_{\mathrm{WO}}\cup\mathcal L_\Sigma\) be a fresh block,
and let \(\mathbb R\) be a c.c.c. side forcing supported below \(\alpha\) in the
above sense.  Then, after forcing with \(\mathbb R\), the fresh block
\[
   \langle T_{\omega\cdot\alpha+n}:n<\omega\rangle
\]
which is added by the tail preliminary factor still consists of Suslin trees.
Consequently no branch-versus-specialize pattern appears on the block
\(\alpha\) before the construction deliberately acts on that block.
\end{lemma}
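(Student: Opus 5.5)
The plan is to reduce the statement to Lemma~\ref{lem:jech-subtlety} by a product factorization, and then to apply that lemma coordinatewise in the fresh block.

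\emph{Step 1: factor the forcing.} Following the FFDZ factorization recalled before the lemma, write the relevant forcing as
\[
(\Pp_{0,<\alpha}*\mathbb R)\times\Pp_{0,\geq\alpha}.
\]
This uses the hypothesis that \(\mathbb R\) is supported below \(\alpha\): \(\mathbb R\) lives in the extension by \(\Pp_{0,<\alpha}\), and it mentions no tree, branch, specialization or coding real of the block \(\alpha\). Commuting the factors, the extension is obtained by first forcing with \(\Pp_{0,\geq\alpha}\) minus the Jech coordinate \(\Kk^0_{\omega\cdot\alpha+n}\) in question. We then force with \(\Pp_{0,<\alpha}*\mathbb R\), and finally with the single Jech factor \(\mathbb J=\Kk^0_{\omega\cdot\alpha+n}\).

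Since the supports are mixed but the coordinates are disjoint, \(\mathbb J\) is a full product factor of \(\Pp_0\). So we may write \(\Pp_0=\mathbb J\times\Pp_0'\), and \(\mathbb J\) is countably closed in \(L\). It remains countably closed over \(L^{\Pp_0'}\), because \(\Pp_0'\) is \(\omega\)-distributive by Theorem~\ref{thm:ffdz-preliminary} and so adds no new countable sequences of conditions. Let \(N\) denote the model obtained from \(L^{\Pp_0'}\) by the coordinates other than \(\mathbb J\) together with \(\mathbb R\), formed over \(\Pp_{0,<\alpha}\).

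\emph{Step 2: apply Lemma~\ref{lem:jech-subtlety}.} Here there is one subtlety: \(\mathbb R\) is c.c.c. in its own extension, but Lemma~\ref{lem:jech-subtlety} requires a c.c.c. forcing in the ground model over which \(\mathbb J\) is closed. I would take as ground model \(L^{\Pp_0'}\), in which \(\mathbb J\) is still countably closed, and let \(\mathbb P\) be the c.c.c. iteration part, viewed as a forcing in \(L^{\Pp_0'}\). The fact that it stays c.c.c. there is exactly what FFDZ Claim~2.7 provides. Lemma~\ref{lem:jech-subtlety} then gives \(\mathbb P\times\mathbb J\Vdash\dot T\) Suslin. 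Since this holds for each \(n\) separately (the other trees of the block being absorbed into the ground), every \(T_{\omega\cdot\alpha+n}\) is Suslin after \(\mathbb R\).

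I expect this step to be the main obstacle: verifying that the side forcing really can be presented as a c.c.c. forcing in a ground model over which the single Jech factor is countably closed. To handle it, I would use the product structure of the mixed support, isolating one \(\omega\)-coordinate as a genuine product factor, together with \(\omega\)-distributivity from Theorem~\ref{thm:ffdz-preliminary}.

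\emph{Step 3: the consequence.} A Suslin tree has no cofinal branch and is not special, since a specialization would decompose it into countably many antichains and hence make it have no uncountable chains and be countable-antichain-decomposed, which contradicts being an \(\omega_1\)-tree with countable antichains. So in the side extension each tree of the block is neither specialized nor branched. This also holds in every countable suitable model decoding it correctly, since such a model's witnesses would reflect up through the localization property. Hence no branch-versus-specialize pattern exists on block \(\alpha\) until the construction acts on it.
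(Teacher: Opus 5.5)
Your overall strategy matches the paper's: isolate the single Jech coordinate, regard the side forcing as a c.c.c.\ poset over a model in which that coordinate is still countably closed, and apply Lemma~\ref{lem:jech-subtlety} column by column. The gap is in Step~1, where you assert that $\mathbb J=\Kk^0_{\omega\cdot\alpha+n}$ is a full product factor, $\Pp_0=\mathbb J\times\Pp_0'$, with the rest of the block ``absorbed into the ground''. For the preliminary forcing as defined this is false, because $\Kk_\alpha=\Kk_{0,\alpha}*\Kk_{1,\alpha}*\Kk_{2,\alpha}*\Kk_{3,\alpha}$ is an iteration, not a product:
\begin{itemize}
\item $\Kk_{1,\alpha,n}$ is defined from the generic tree $T_{\omega\cdot\alpha+n}$;
\item $Z_{\omega\cdot\alpha+n}$, and hence $\Kk_{2,\alpha,n}$, depend on the resulting clubs;
\item $\Kk_{3,\alpha,m}=\Ll(X_{\omega\cdot\alpha+m},T_{\omega\cdot\alpha+m},X_{\omega\cdot\alpha})$ depends on the tree and, for every $m$, on the column-$0$ real $X_{\omega\cdot\alpha}$.
\end{itemize}
A legitimate ground model is therefore the extension by everything except the post-Jech components of block $\alpha$ and $\mathbb J$ itself. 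There your $\omega$-distributivity argument does keep $\mathbb J$ countably closed, and Lemma~\ref{lem:jech-subtlety} gives Suslinity of $T_{\omega\cdot\alpha+n}$ after $\mathbb J\times\mathbb R$. But the model the lemma is about, $\Wground$ followed by $\mathbb R$, also contains the generics for $\Kk_{1,\alpha}*\Kk_{2,\alpha}*\Kk_{3,\alpha}$, which are forced after $\mathbb J$. Nothing in your argument shows that these preserve the Suslinity of $T_{\omega\cdot\alpha+n}$ jointly with $\mathbb R$.

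Part of this is easy to repair. $\Kk_{1,\alpha}$ and $\Kk_{2,\alpha}$ are countably closed, so by Easton's lemma the c.c.c.\ poset $\mathbb R\times T_{\omega\cdot\alpha+n}$ remains c.c.c.\ after them. The localization factor $\Kk_{3,\alpha}$, however, is only $\omega$-distributive, not countably closed. This is exactly the part the paper does not reprove but delegates to the FFDZ mixed-support analysis (``the remaining preliminary coordinates are handled exactly as in the FFDZ mixed-support analysis''). Your proof needs the same input and does not supply or cite it.

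Separately, the last sentence of Step~3 is not justified as stated. Localization pushes tree information down into suitable models; it does not reflect witnesses up. A countable model may also contain a cofinal branch through the countable tree it decodes, so the claim about every countable suitable model is too strong. What actually excludes a spurious pattern is this: a $\PsiThree$-witness must work in the collapse of a countable elementary submodel of $H_\Theta$, so by elementarity the pattern would hold for the real tree. The stationarity preservation of Theorem~\ref{thm:ffdz-preliminary} then guarantees that the decoded tree is $T_{\omega\cdot\alpha+n}$. This is the mechanism the paper invokes.
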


\begin{proof}
Work in the FFDZ factorization at the fresh block \(\alpha\).  The side forcing
\(\mathbb R\) is supported below \(\alpha\), so the fresh tree-adding coordinates
for \(T_{\omega\cdot\alpha+n}\) remain independent tail coordinates.  For each
fixed \(n<\omega\), the relevant part of the forcing factors, over the ground
model in which the preliminary apparatus is defined, as a product
\[
   \mathbb R\times\mathbb J_n,
\]
where \(\mathbb J_n\) is the Jech forcing adding
\(T_{\omega\cdot\alpha+n}\).  Lemma~\ref{lem:jech-subtlety} therefore implies
that \(\mathbb R\times\mathbb J_n\) forces the \(\mathbb J_n\)-generic tree to be
Suslin.  Thus, in the presentation where the side forcing is performed first,
the old Jech forcing still adds a Suslin tree, even though it need not be
countably closed over the side-forcing extension.

The remaining preliminary coordinates are handled exactly as in the FFDZ
mixed-support analysis.  In particular, the stationary-preservation argument for
unused coordinates prevents the localization apparatus from falsely interpreting
an untouched tree as having already been specialized or branched.  Hence a fresh
block remains genuinely fresh until the recursive construction assigns that
block to a wellorder or uniformization coding stage.  At that later stage the
appearance of the branch-versus-specialize pattern is intentional and is no
longer covered by the side-forcing hypothesis of the lemma.
\end{proof}

\section{A reusable branch-versus-specialize code}\label{sec:psi-three}

The wellorder predicate from Lemma~\ref{lem:wo-sigma13} will be kept separate
from the predicate used for uniformization.  We use the partition
\[
\mathcal L_{\mathrm{MA}},\qquad \mathcal L_{\mathrm{WO}},\qquad
\mathcal L_{\Sigma}
\]
fixed at the beginning of Section~\ref{sec:ffdz}: wellorder blocks live on
$\mathcal L_{\mathrm{WO}}$, while the reusable coding predicate below only looks
at blocks from $\mathcal L_{\Sigma}$.

Fix a recursive injection
\[
\Delta:2^\omega\longrightarrow\mathcal P(\omega)
\]
whose range omits $\omega$.  The omitted value $\omega$ will serve as a default
pattern for unused or dummy uniformization blocks.

For $\alpha\in\mathcal L_\Sigma$ and $z\in 2^\omega$, the block action
$\Qq^{\mathrm{blk}}(\alpha,z)$ is the finite-support iteration over $n\in\omega$
which specializes $T_{\omega\cdot\alpha+n}$ when $n\in\Delta(z)$ and forces with
$T_{\omega\cdot\alpha+n}$, thereby adding a cofinal branch, when
$n\notin\Delta(z)$.  The block is then almost-disjointly coded by a real
$R_\alpha$, exactly as in the FFDZ real-coding stage.  If a stage is required to
remain dummy, we specialize all trees in the block; this has pattern $\omega$ and
therefore does not code a real in the range of $\Delta$.

\begin{definition}[The base predicate]\label{def:psi3}
For a real $z$, let $\PsiThree(z)$ be the assertion that there is a real $R$ such
that for every countable suitable model $M$ with $R,z\in M$, there is
$\bar\alpha\in\mathcal L_\Sigma^M$ such that, for every $n\in\omega$, the tree
\[
T^M_{\bar\alpha,n}=\{\gamma<\omega_1^M:
S^M_{\omega_1\cdot(\omega\cdot\bar\alpha+n)+\gamma}
\text{ is nonstationary in }M\}
\]
is an $\omega_1^M$-tree and $M$ sees that
\[
\begin{array}{rcl}
n\in\Delta(z)&\Longrightarrow& T^M_{\bar\alpha,n}\text{ is specialized},\\
n\notin\Delta(z)&\Longrightarrow& T^M_{\bar\alpha,n}\text{ has a cofinal branch}.
\end{array}
\]
\end{definition}

\begin{lemma}\label{lem:psi3-complexity}
The predicate $\PsiThree(z)$ is lightface $\Sigma^1_3$.
\end{lemma}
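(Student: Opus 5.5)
The plan mirrors the complexity half of Lemma~\ref{lem:wo-sigma13}. We show that the definition in Definition~\ref{def:psi3} has the normal form
\[
\exists R\,\forall m\,\bigl(m \text{ codes a countable suitable model } M \text{ with } R,z\in M\ \Longrightarrow\ \vartheta(m,z)\bigr),
\]
with $\vartheta$ arithmetical in $m$ and $z$. Then $\PsiThree$ is $\exists\forall$ over a $\Delta^1_2$ (or simpler) matrix, hence lightface $\Sigma^1_3$.

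First, we represent countable transitive models by reals $m\subseteq\omega$ coding a binary relation $E_m$ on $\omega$. The clause ``$E_m$ is well-founded and extensional'' is $\Pi^1_1$. Given well-foundedness, the transitive collapse $M$ exists. Statements $M\models\phi$ for first-order $\phi$ become arithmetical in $m$, since satisfaction in $(\omega,E_m)$ is arithmetical.

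Suitability is a first-order property of $M$: $\ZF^-$, existence of $\omega_2^M$, and $\omega_2^M=\omega_2^{L^M}$. So it is arithmetical in $m$. The clauses ``$R\in M$'' and ``$z\in M$'' say that some $k\in\omega$ has $E_m$-extension coding the real $R$ (resp.\ $z$). Because integers of $M$ are identified via the collapse in an arithmetical way, these clauses are arithmetical as well.

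The antecedent ``$m$ codes a suitable model containing $R,z$'' is therefore $\Pi^1_1$, and it appears negatively, yielding a $\Sigma^1_1$ disjunct. Hence the matrix is $\Sigma^1_1\vee(\text{arithmetical})\subseteq\Delta^1_2$. After the quantifier $\forall m$ the formula is $\Pi^1_2$, and after $\exists R$ it is $\Sigma^1_3$.

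Next, we check that the consequent is first-order over $M$ with parameter $z$ only, and no other parameters. The class $\mathcal L^M_\Sigma$ is defined in $M$ by the canonical definition fixed at the start of Section~\ref{sec:ffdz}, evaluated in $L^M$. $\vec S^M$ is given by the $\Sigma_1$ definition over $L_{\omega_3}$ with parameter $\omega_2^M$ as interpreted in $L^M$. Suitability guarantees that $\omega_2^M$ and $\omega_1^M$ agree with their $L^M$ versions. The tree $T^M_{\bar\alpha,n}$ is then a definable subset of $\omega_1^M$ in $M$. The conditions ``is an $\omega_1^M$-tree'', ``is specialized'' (there is a specializing function into $\omega$ in $M$) and ``has a cofinal branch'' (in $M$) are all first-order over $M$. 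The map $n\mapsto\Delta(z)$ is recursive, so ``$n\in\Delta(z)$'' is recursive in $z$. Thus the whole consequent $\exists\bar\alpha\,\forall n(\dots)$ is a single first-order sentence about $M$ with parameter $z$, and its truth is arithmetical in $(m,z)$. No real parameters other than $z$ and the quantified $R$ occur, which gives lightface.

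The main obstacle I expect is only bookkeeping. It consists of verifying that the canonical objects ($\vec S$, the partition, the bijection identifying trees with subsets of $\omega_1$) are genuinely definable without parameters inside any suitable $M$, so that no hidden parameter breaks lightface-ness. I would handle this by pointing to the definability conventions stated in Section~\ref{sec:ffdz}: $\Sigma_1$ over $L_{\omega_3}$ from $\omega_2$, $L$-least bijections, and the partition defined in $L$ and interpreted internally in suitable models. The remaining point is to confirm that quantifying over codes $m$ rather than over models introduces no extra complexity, via the standard $\Pi^1_1$ well-foundedness clause.
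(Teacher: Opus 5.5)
Your proposal is correct and is the same argument as the paper's, which only records the normal form $\exists R\,\forall M\,\vartheta(R,z,M)$ with $\vartheta$ arithmetical in the code of $M$. You additionally make explicit the $\Pi^1_1$ well-foundedness clause that the paper hides in the range of the quantifier over codes, as well as the lightface check, though one harmless imprecision remains: ``$M\models\ZF^-$'' is an infinite schema, so it is naturally $\Delta^1_1$ in $m$ (via the full satisfaction relation) rather than arithmetical, which still keeps the antecedent $\Pi^1_1$ and does not change the final count.
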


\begin{proof}
The assertion has the form $\exists R\,\forall M\,\vartheta(R,z,M)$, where $M$
ranges over reals coding countable suitable transitive models and $\vartheta$ is
arithmetical in the code for $M$ together with the canonical $L$-definitions of
$\vec S$, the partition classes and the block decoding.  Thus the complexity is
$\Sigma^1_3$.
\end{proof}

Before the full iteration is defined, we record the local correctness statement for the
base predicate.  It applies to any forcing construction satisfying the block
discipline used below: every block from $\mathcal L_\Sigma$ is either acted on once with
pattern $\Delta(z)$ for a unique real $z$, or is made a dummy block by specializing every
tree in the block; once a block has been used or declared dummy, no later stage acts on it.

\begin{lemma}[Unary exactness of the base code]\label{lem:block-correctness}
Let $\Pp$ be a completed iteration satisfying this block discipline, and let $G$ be
$\Pp$-generic.  Then, in $V[G]$, for every real $z$,
\[
\PsiThree(z)
\]
holds if and only if $z$ was deliberately placed by the iteration into one of the
uniformization-reserved branch-versus-specialize coding blocks.
\end{lemma}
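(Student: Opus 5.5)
The proof splits into the two directions of the biconditional. Both follow the template of the correctness argument for $\WO$ in Lemma~\ref{lem:wo-sigma13}, that is, \cite[Lemma 2.8]{FISCHER2013763}, with $\mathcal L_{\mathrm{WO}}$ replaced by $\mathcal L_\Sigma$ and $\Delta_{\mathrm{WO}}$ replaced by $\Delta$.

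\emph{Forward direction (deliberate placement implies $\PsiThree(z)$).} Suppose $z$ was placed on a block $\alpha\in\mathcal L_\Sigma$ by the action $\Qq^{\mathrm{blk}}(\alpha,z)$. Take $R=R_\alpha$, the almost-disjoint coding real of that block. By construction, $R_\alpha$ codes three pieces of information:
\begin{enumerate}[label=(\roman*)]
\item the localization reals $X_{\omega\cdot\alpha+n}$;
\item the specializing functions for $n\in\Delta(z)$;
\item the generic branches for $n\notin\Delta(z)$.
\end{enumerate}
Let $M$ be any countable suitable model containing $R$ and $z$. In $M$ we decode $R$, and through the localization forcing $\Ll$ (clause (3) of its definition) $M$ can read off the clubs witnessing nonstationarity of $S_{\omega_1\cdot(\omega\cdot\bar\alpha+n)+\gamma}$ for exactly the $\gamma$ in the tree below $\omega_1^M$. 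Here $\bar\alpha$ is the collapse of $\alpha$, and it lies in $\mathcal L_\Sigma^M$ because the partition is defined canonically and absolutely. Hence $T^M_{\bar\alpha,n}$ is an initial segment of $T_{\omega\cdot\alpha+n}$, and $M$ sees the restricted specializing maps and branches. Since every real $M$ ranges over belongs to $V[G]$, this gives $\PsiThree(z)$ in $V[G]$.

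\emph{Backward direction ($\PsiThree(z)$ implies deliberate placement).} Fix a witness $R$. Choose a continuous increasing chain of elementary submodels of a large $L_\theta[G]$ containing $R$ and $z$. Its countable transitive collapses $M$ are suitable, so each yields some $\bar\alpha$ with the required pattern. Using the $\omega_2$-level localization, as in FFDZ, this reflects upward to a genuine block $\alpha\in\mathcal L_\Sigma$ for which, in $V[G]$, each $T_{\omega\cdot\alpha+n}$ is specialized when $n\in\Delta(z)$ and has a cofinal branch when $n\notin\Delta(z)$. Two ingredients make this reflection work:
\begin{enumerate}[label=(\alph*)]
\item Theorem~\ref{thm:ffdz-preliminary}: the sets $S$ indexed outside the trees remain stationary, so nonstationarity in $M$ genuinely reflects tree membership, and the decoded trees really are the $T_{\omega\cdot\alpha+n}$;
\item the c.c.c. of the iteration.
\end{enumerate}

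It remains to see that block $\alpha$ was deliberately acted on with pattern $\Delta(z)$. By Lemma~\ref{lem:no-accidental-codes}, if block $\alpha$ had never been acted on, every $T_{\omega\cdot\alpha+n}$ would still be Suslin. This requires factoring the full iteration as a side forcing supported below $\alpha$, which the block discipline permits. A Suslin tree has no cofinal branch, yet $\Delta(z)\neq\omega$ (the range of $\Delta$ omits $\omega$), so some tree must carry a branch. Hence the block was acted on. It was not a dummy block, since a dummy block specializes every tree; a specialized tree has no cofinal branch, which again contradicts the existence of some $n\notin\Delta(z)$. So the block was acted on exactly once, with pattern $\Delta(z')$ for a unique $z'$. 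Because no tree is both specialized and branched, the pattern seen equals $\Delta(z')$. Injectivity of $\Delta$ then gives $z=z'$.

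\emph{Main obstacle.} The hard step is the reflection argument in the backward direction: turning "every countable suitable $M$ containing $R$ sees the pattern on some $\bar\alpha$" into a real block in $V[G]$. Here I would follow \cite[Lemma 2.8]{FISCHER2013763} verbatim, taking $M$ to be collapses of elementary submodels of size $\aleph_1$ and using the preserved stationary sets to exclude spurious nonstationarity. I would cite that argument rather than redo it, noting only that the interleaved MA, wellorder, and uniformization stages are all c.c.c. and obey the block discipline, so they fall under the FFDZ factorization.
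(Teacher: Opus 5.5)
Your proposal is correct and follows the paper's proof essentially step for step. The forward direction uses $R_\alpha$ and the localization component as the paper does, and the backward direction collapses an elementary submodel to get a genuine block $\alpha\in\mathcal L_\Sigma$ with pattern $\Delta(z)$, excludes fresh blocks via Lemma~\ref{lem:no-accidental-codes} and dummy blocks via $\omega\notin\operatorname{ran}(\Delta)$, and concludes by injectivity of $\Delta$; the only difference is that the paper uses a single countable $N\prec H_\Theta^{V[G]}$, where elementarity of the collapse already transfers the pattern to the uncollapsed block, so your chain of submodels and the passage to uncountable models are unnecessary.
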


\begin{proof}
We first fix the notation which will be used in both directions.  If
$\alpha\in\mathcal L_{\Sigma}$ and $n\in\omega$, put
\[
E^G_{\alpha,n}=\{\gamma<\omega_1:
S_{\omega_1\cdot(\omega\cdot\alpha+n)+\gamma}\text{ is nonstationary in }V[G]\}.
\]
By the preliminary FFDZ coding and the preservation theorem for the stationary sets,
\[
E^G_{\alpha,n}=T_{\omega\cdot\alpha+n}.                 \tag{1}
\]
More precisely, if $\gamma\in T_{\omega\cdot\alpha+n}$, the preliminary stationary-killing
forcing added a club through the complement of
$S_{\omega_1\cdot(\omega\cdot\alpha+n)+\gamma}$; while if
$\gamma\notin T_{\omega\cdot\alpha+n}$, the corresponding stationary set remains
stationary through the preliminary forcing and through all later c.c.c. stages which do
not explicitly kill it.  Thus the nonstationarity pattern recovers exactly the tree
$T_{\omega\cdot\alpha+n}$.

For a uniformization block $\alpha$, define its final pattern $b_\alpha\subseteq\omega$ by
\[
n\in b_\alpha
\quad\Longleftrightarrow\quad
\text{the action at coordinate }n\text{ of block }\alpha\text{ is specialization},
\]
equivalently,
\begin{align*}
   n\notin b_\alpha
\quad\Longleftrightarrow& \quad
\text{the action at coordinate }n\text{ of block }\alpha\text{ is forcing with }
T_{\omega\cdot\alpha+n} \\& \quad \text{and hence adding a cofinal branch}. 
\end{align*}
The construction gives the following dichotomy for every $\alpha\in\mathcal L_{\Sigma}$:
\[
b_\alpha=\Delta(u)\text{ for the unique real }u\text{ deliberately coded at }
\alpha,                                      \tag{2a}
\]
or else the block is a default block and
\[
b_\alpha=\omega.                             \tag{2b}
\]
Since $\omega\notin\operatorname{ran}(\Delta)$ and $\Delta$ is injective, the value of
$b_\alpha$, when it belongs to $\operatorname{ran}(\Delta)$, determines the coded real
uniquely.

Suppose first that $z$ is deliberately coded at the uniformization block
$\alpha\in\mathcal L_{\Sigma}$.  Let $R=R_\alpha$ be the real added by the
almost-disjoint coding step of this block.  Thus $R$ codes the three sequences
\[
\langle A_{\omega\cdot\alpha+n}:n\in\omega\rangle,
\qquad
\langle Y_{\omega\cdot\alpha+n}:n\in\omega\rangle,
\qquad
\langle T_{\omega\cdot\alpha+n}:n\in\omega\rangle,
\]
where $A_{\omega\cdot\alpha+n}$ is a specializing function if $n\in\Delta(z)$ and a
cofinal branch through $T_{\omega\cdot\alpha+n}$ if $n\notin\Delta(z)$.

Let $M$ be a countable suitable model with $R,z\in M$.  Decoding $R$ inside $M$ gives
initial segments
\[
\langle A^M_n:n\in\omega\rangle,
\qquad
\langle Y^M_n:n\in\omega\rangle,
\qquad
\langle U^M_n:n\in\omega\rangle,
\]
and a block index $\beta^M\in M$ which $M$ regards as an element of
$\mathcal L_{\Sigma}$.  By the localization part of the preliminary forcing
(the $K_3$-step in the FFDZ apparatus), for every $n\in\omega$ the model $M$ satisfies
that the tree decoded from the stationary-kill pattern at the block
$\omega\cdot\beta^M+n$ is precisely $U^M_n$.  In the notation of Definition~\ref{def:psi3},
\[
M\models
T^M_{\omega\cdot\beta^M,n}=U^M_n.              \tag{3}
\]
Moreover the object $A^M_n$ decoded from $R$ has, in $M$, exactly the kind prescribed by
$\Delta(z)$:
\[
\begin{array}{rcl}
n\in\Delta(z)     &\Longrightarrow& M\text{ sees that }A^M_n\text{ specializes }T^M_{\omega\cdot\beta^M,n},\\[2mm]
n\notin\Delta(z) &\Longrightarrow& M\text{ sees that }A^M_n\text{ is a cofinal branch through }T^M_{\omega\cdot\beta^M,n}.
\end{array}                                      \tag{4}
\]
Equations (3) and (4) verify the matrix in Definition~\ref{def:psi3}.  Since $M$ was
arbitrary, the real $R_\alpha$ witnesses $\PsiThree(z)$.

Conversely suppose that $\PsiThree(w)$ holds in $V[G]$, and let $R$ witness it.
Choose a sufficiently large regular $\Theta$ and a countable elementary submodel
\[
N\prec H_\Theta^{V[G]}
\]
with
\[
R,w,\Pp_{\omega_3},G,\mathcal L_{\Sigma},\Delta\in N.
\]
Let $\pi:N\to M$ be the transitive collapse.  Since $R$ and $w$ are reals, the
collapse fixes them.  The model $M$ is a countable suitable transitive model
containing $R$ and $w$, so Definition~\ref{def:psi3} supplies, inside $M$, an
index $\bar\beta\in\mathcal L_{\Sigma}^M$ witnessing the required
branch-versus-specialize pattern for $w$.  Let $\beta=\pi^{-1}(\bar\beta)$ and
let $\alpha$ be the corresponding external preliminary block index.

Inside $M$, for every $n\in\omega$, the tree decoded from the nonstationarity
pattern at the block $\omega\cdot\bar\beta+n$ is specialized exactly when
$n\in\Delta(w)$ and has a cofinal branch exactly when $n\notin\Delta(w)$.  By the
preliminary FFDZ localization and condensation analysis, this decoded tree is the
collapse of an initial segment of the genuine preliminary tree
$T_{\omega\cdot\alpha+n}$.  Consequently the pattern seen in $M$ is the collapse
of the actual final branch-versus-specialize pattern $b_\alpha$.  Hence
\[
\forall n\in\omega\quad
\bigl(n\in b_\alpha\Longleftrightarrow n\in\Delta(w)\bigr),
\]
and therefore
\[
b_\alpha=\Delta(w).                            \tag{5}
\]
The index $\alpha$ belongs to the uniformization-reserved class, because the
witnessing formula explicitly requires the block index to lie in
$\mathcal L_\Sigma$.  It cannot be a wellorder block, since the classes
$\mathcal L_{\mathrm{WO}}$ and $\mathcal L_\Sigma$ are disjoint.  It cannot be an
unused fresh block: by Lemma~\ref{lem:no-accidental-codes}, before a fresh block
is acted upon all trees in that block remain Suslin, so no
branch-versus-specialize pattern can be decoded there.  It cannot be a default
block either, because default blocks have pattern $\omega$, while
$\omega\notin\operatorname{ran}(\Delta)$.

Thus the action at block $\alpha$ was a genuine coding action.  By (2a), there is a
unique real $u$ deliberately coded at $\alpha$, and
\[
b_\alpha=\Delta(u).
\]
Together with (5) and the injectivity of $\Delta$, this gives $u=w$.  Hence $w$ was
deliberately placed into the uniformization-reserved block $\alpha$, as required.
\end{proof}

\section{Bookkeeping compression and cardinal arithmetic}\label{sec:compression}

The uniformization construction needs to let the bookkeeping talk about long
sequences of reals, for example families of counter-witnesses indexed by an
initial segment of the final wellorder.  The role of compression is purely
external.  We use cardinal arithmetic to choose, while defining the iteration,
representatives for such sequences.  These representatives are not part of the
final projective definitions.

\begin{lemma}\label{lem:bounded-sequence-names}
Let $\langle\mathbb P_\xi,\dot{\mathbb Q}_\xi:\xi\leq\omega_3\rangle$ be a
finite-support c.c.c. iteration whose iterands have size $<\aleph_3$.  If
$\alpha<\omega_3$, then every $\mathbb P_{\omega_3}$-name for an
$\alpha$-sequence of reals is equivalent to a $\mathbb P_\beta$-name for some
$\beta<\omega_3$.
\end{lemma}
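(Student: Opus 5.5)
A $\mathbb P_{\omega_3}$-name for an $\alpha$-sequence of reals reduces to $\alpha\cdot\omega$ nice names for subsets of $\omega$. Each nice name involves only countably many conditions, and each condition in a finite-support iteration has finite support. So the whole name involves at most $|\alpha|\cdot\aleph_0\le\aleph_2$ coordinates. Since $\omega_3$ is regular, these coordinates are bounded by some $\beta<\omega_3$.

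\emph{Reduction to nice names.} Let $\dot s$ be a $\mathbb P_{\omega_3}$-name with $\Vdash\dot s:\check\alpha\to 2^\omega$. For each $\xi<\alpha$ and $n<\omega$, use the c.c.c. of $\mathbb P_{\omega_3}$ to choose a maximal antichain $A_{\xi,n}$ deciding the value of $\dot s(\xi)(n)$. Each $A_{\xi,n}$ is countable. Define the nice name $\dot s'$ by
\[
\dot s'=\bigl\{\bigl(\mathrm{op}(\check\xi,\mathrm{op}(\check n,\check i)),p\bigr):\xi<\alpha,\ n<\omega,\ p\in A_{\xi,n},\ p\Vdash\dot s(\xi)(n)=i\bigr\}.
\]
A routine density argument shows $\Vdash\dot s=\dot s'$.

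\emph{Bounding the supports.} Each $p\in\mathbb P_{\omega_3}$ has finite support, so
\[
S=\bigcup_{\xi<\alpha,\,n<\omega}\ \bigcup_{p\in A_{\xi,n}}\supp(p)
\]
has size at most $|\alpha|\cdot\aleph_0\cdot\aleph_0\le\aleph_2$. By regularity of $\omega_3$, $\beta:=\sup(S)+1<\omega_3$. Every $p\in A_{\xi,n}$ then satisfies $p=p\upharpoonright\beta\in\mathbb P_\beta$.

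\emph{Main point: each $A_{\xi,n}$ is a maximal antichain of $\mathbb P_\beta$.} Antichainhood transfers because $\mathbb P_\beta$ is a complete suborder of $\mathbb P_{\omega_3}$ in the standard way. For maximality, take $q\in\mathbb P_\beta$ and view it as an element of $\mathbb P_{\omega_3}$. It is compatible there with some $p\in A_{\xi,n}$, witnessed by some $r\le p,q$. Then $r\upharpoonright\beta\le p\upharpoonright\beta=p$ and $r\upharpoonright\beta\le q$, so $p$ and $q$ are already compatible in $\mathbb P_\beta$.

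Hence $\dot s'$ is literally a $\mathbb P_\beta$-name. Since $\mathbb P_\beta\lessdot\mathbb P_{\omega_3}$, its interpretation by $G\cap\mathbb P_\beta$ equals its interpretation by $G$, and that equals $\dot s^G$. The hypothesis that the iterands have size $<\aleph_3$ is not needed in this argument; it matters elsewhere, for the $\aleph_3$-c.c. and for counting names. The one point to keep honest is that ``condition in $\mathbb P_\beta$'' means a sequence whose coordinates are $\mathbb P_\gamma$-names for $\gamma<\beta$, which is exactly what the restriction $p\upharpoonright\beta$ provides.
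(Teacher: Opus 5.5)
Your proof is correct and follows the paper's argument. You replace the name by a nice name built from countable maximal antichains deciding each bit $\dot s(\xi)(n)$, bound the union of the finite supports by $|\alpha|\cdot\aleph_0\leq\aleph_2$, and use the regularity of $\omega_3$. Beyond the paper, you check that the antichains stay maximal in $\mathbb P_\beta$, and you rightly note that the size bound on the iterands is not used.
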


\begin{proof}
Code an $\alpha$-sequence of reals by a subset of $\alpha\times\omega$.  For each
coordinate $(\xi,n)\in\alpha\times\omega$, choose a countable maximal antichain
deciding the corresponding bit.  The union of the finite supports of all
conditions appearing in these antichains has size at most
$|\alpha|\cdot\aleph_0\leq\aleph_2$.  Since $\omega_3$ is regular, this union is
bounded in $\omega_3$.  Hence the name is equivalent to a name over a proper
initial segment of the iteration.
\end{proof}

\begin{lemma}\label{lem:final-cardarith}
For the forcing constructed below, the final model satisfies
\[
2^\omega=2^{\omega_1}=2^{\omega_2}=\aleph_3.
\]
Consequently, for every $0<\alpha<\omega_3$,
\[
(2^\omega)^\alpha=2^\omega.
\]
\end{lemma}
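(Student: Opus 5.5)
The plan is to compute each cardinal exponent from both sides, using that the whole forcing is $\Pp_0$ followed by a finite-support c.c.c.\ iteration of length $\omega_3$ over $\Wground=L[G_0]$ with iterands of size $<\aleph_3$.

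For the lower bound I will argue that $2^\omega\geq\aleph_3$ in the final model. The MA stages are cofinal in $\omega_3$ (they are indexed by the cofinal class $\mathcal L_{\mathrm{MA}}$ and its successor stages), and each of them, or at least every Cohen-type subforcing the bookkeeping handles, adds a real not present at earlier stages. Lemma~\ref{lem:bounded-sequence-names} with $\alpha=1$ shows that every real appears at some stage $\beta<\omega_3$. So cofinally many stages add new reals, and hence there are $\aleph_3$ reals. Since $2^\omega\le2^{\omega_1}\le2^{\omega_2}$, it then suffices to show $2^{\omega_2}\le\aleph_3$.

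For the upper bound I will start from $\GCH$ in $L$. The preliminary forcing $\Pp_0$ has the $\omega_3$-chain condition and preserves cardinals (Theorem~\ref{thm:ffdz-preliminary}). Its mixed-support conditions are coded by sets of size $\le\aleph_1$ drawn from a universe of size $\aleph_3$, so $|\Pp_0|\le\aleph_3^{\aleph_1}=\aleph_3$ in $L$. Counting nice names for subsets of $\omega_2$ then gives $(2^{\omega_2})^{\Wground}\le(\aleph_3^{\aleph_2})^{\aleph_2}=\aleph_3$. For the c.c.c.\ iteration, an induction on $\beta\le\omega_3$ shows that $\Pp_\beta$ has a dense subset of size $\le\aleph_3$. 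Finite supports and iterands of size $<\aleph_3$ (named by $\le\aleph_2$ antichains) make this routine. Then a nice $\Pp_{\omega_3}$-name for a subset of $\omega_2$ is an $\omega_2$-sequence of countable antichains. There are at most $(\aleph_3^{\aleph_0})^{\aleph_2}=\aleph_3^{\aleph_2}=\aleph_3$ of these, using $(2^{\omega_2})^{\Wground}=\aleph_3$ and the regularity of $\aleph_3$. This gives $2^{\omega_2}\le\aleph_3$, and with the lower bound, all three exponents equal $\aleph_3$.

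For the consequence, take $0<\alpha<\omega_3$, so $|\alpha|\le\aleph_2$. Then $(2^\omega)^\alpha=2^{\omega\cdot|\alpha|}\le2^{\omega_2}=\aleph_3=2^\omega$, and the reverse inequality is trivial.

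I expect the main obstacle to be verifying the size bounds. The iteration is defined only later, so I must check that the MA bookkeeping, the wellorder blocks and the uniformization blocks all use iterands of size $\le\aleph_2$ (with MA handling only posets of size $<2^\omega$). I must also check that the bound $|\Pp_0|\le\aleph_3$ survives the full-support $\omega_1$-sized components. I would handle the latter by invoking the FFDZ counting argument from \cite{FISCHER2013763} together with $\GCH$ in $L$.
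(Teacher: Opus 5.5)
Your proposal is correct and is essentially the paper's argument: a lower bound from cofinally many stages adding new reals, and an upper bound from counting nice names using $\GCH$ in $L$ and the chain condition. The only difference is that you factor the count through $\Wground$, computing $(2^{\omega_2})^{\Wground}=\aleph_3$ first and then counting $\Pp_{\omega_3}$-names over $\Wground$ (correctly relying on this rather than on $\GCH$ there), whereas the paper counts names for the whole forcing at once in $L$ via its $\omega_3$-c.c.\ and size $\aleph_3$. Lemma~\ref{lem:bounded-sequence-names} plays no role in your lower bound, which follows from cofinally many new reals and the regularity of $\omega_3$.
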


\begin{proof}
The preliminary mixed-support forcing has size $\aleph_3$, preserves cardinals,
and adds no reals.  The later finite-support c.c.c. iteration has length
$\omega_3$, all iterands have size $<\aleph_3$, and hence the full forcing has
size $\aleph_3$.  It adds cofinally many reals, so $2^\omega=\aleph_3$ in the
final model.

Working in $L$, the full forcing has the $\omega_3$-chain condition.  Hence every
name for a subset of $\omega_i$, $i\in\{1,2\}$, is equivalent to a nice name which,
for each $\xi<\omega_i$, uses an antichain of size $<\omega_3$.  Thus the number
of such names is at most
\[
(\aleph_3^{<\omega_3})^{\omega_i}=\aleph_3,
\]
using $\GCH$.  Since cardinals are preserved and $2^\omega=\aleph_3$, we get
$2^{\omega_1}=2^{\omega_2}=\aleph_3$.  Finally, if $0<\alpha<\omega_3$, then
$|\alpha|\leq\omega_2$, and therefore
\[
(2^\omega)^\alpha=\aleph_3^\alpha\leq \aleph_3^{\omega_2}=2^{\omega_2}=\aleph_3=2^\omega.
\]
The reverse inequality is trivial.
\end{proof}

\begin{convention}[External compression maps]\label{conv:compression-maps}
For every $0<\alpha<\omega_3$, fix externally, at the relevant stage of the
construction, a surjection
\[
\pi_\alpha:2^\omega\longrightarrow (2^\omega)^\alpha
\]
or equivalently a coding of $\alpha$-sequences of reals by single reals.  If
$\pi_\alpha(b)=\langle b^\eta:\eta<\alpha\rangle$, we write
$b^\eta$ for the $\eta$-th decoded real.  These maps are bookkeeping devices only
and are never mentioned in the final projective definitions.
\end{convention}

\begin{proof}[Justification of the convention]
Lemma~\ref{lem:final-cardarith} gives $|(2^\omega)^\alpha|=2^\omega$, and
Lemma~\ref{lem:bounded-sequence-names} ensures that every relevant final
sequence-name appears at some bounded stage.  Therefore the bookkeeping can
present a single real whose external decoding is the required sequence.
\end{proof}

\section{The final iteration}\label{sec:iteration}

We now define the finite-support c.c.c. iteration over $\Wground=L[G_0]$.  The
stage set $\omega_3$ is partitioned into three cofinal bookkeeping classes:
\[
I_{\mathrm{MA}},\qquad I_{\mathrm{WO}},\qquad I_\Sigma.
\]
Stages in $I_{\mathrm{MA}}$ force Martin's Axiom, stages in $I_{\mathrm{WO}}$
repeat the FFDZ wellorder coding, and stages in $I_\Sigma$ carry out the global
$\Sigma$-uniformization construction.  The classes
$\mathcal L_{\mathrm{MA}},\mathcal L_{\mathrm{WO}},\mathcal L_\Sigma$ from
Section~\ref{sec:psi-three} are classes of preliminary apparatus blocks, while
$I_{\mathrm{MA}},I_{\mathrm{WO}},I_\Sigma$ are classes of later c.c.c. iteration
stages.  A stage in $I_{\mathrm{WO}}$ chooses a fresh block from
$\mathcal L_{\mathrm{WO}}$, and a stage in $I_\Sigma$ chooses a fresh block from
$\mathcal L_\Sigma$. The two partitions serve different purposes and should not be conflated.

The classes
\[
\mathcal L_{\mathrm{MA}},\quad
\mathcal L_{\mathrm{WO}},\quad
\mathcal L_{\Sigma}
\]
classify the bookkeeping requirements handled by the construction. That is, they
determine whether a given stage of the bookkeeping concerns side forcing for
Martin's Axiom/cardinal characteristics, coding for the projective wellorder,
or global \(\Sigma\)-uniformization.

By contrast, the sets
\[
I_{\mathrm{MA}},\quad
I_{\mathrm{WO}},\quad
I_{\Sigma}
\]
form a partition of the actual iteration coordinates. They determine where in the
iteration the corresponding forcing operations are allowed to act.

This distinction is important because the construction repeatedly needs fresh
unused FFDZ blocks. The bookkeeping may decide that a certain request is of
uniformization type, i.e. belongs to \(\mathcal L_{\Sigma}\), but the associated
coding must still be carried out on a previously unused coordinate from
\(I_{\Sigma}\). Similarly, wellorder coding is always confined to
\(I_{\mathrm{WO}}\), ensuring that the two coding mechanisms cannot interfere.

Thus the \(\mathcal L\)-classes organize the semantic bookkeeping, whereas the
\(I\)-classes organize the geometric support structure of the iteration.

We also fix, once and for all, a canonical $L$-definable ordering of all nice
names for reals appearing in initial segments of the later finite-support
iteration.  Names from shorter initial segments precede names which genuinely
require longer initial segments, and ties are broken by the canonical wellorder
of $L$.  For a real $x$ present at a stage and an ordinal $\xi<\omega_3$, the
notation
\[
(x,y_\xi,a^\xi_0)
\]
refers to the interpretation of the $\xi$-th pair of canonical names for reals,
with the first coordinate fixed to be $x$.  We arrange that the first pair is
$(0,0)$, so $(x,y_0,a^0_0)=(x,0,0)$.  In the final model this name ordering is
exactly the ordering which underlies the stage-of-appearance wellorder.  Thus no
uniformization stage needs access to the final relation $<_G$; it only uses the
canonical names available at that stage.

The construction maintains an external increasing sequence
\[
\langle\mathcal F_\alpha:\alpha\leq\omega_3\rangle
\]
of forbidden canonical names for real codes of uniformization tuples.  If a
canonical name $\dot z$ belongs to $\mathcal F_\alpha$, then no later
$\Sigma$-stage is allowed deliberately to code any real forced equal to
$\dot z$.  When a stage declares a tuple forbidden, the construction adds the
least canonical name for the recursive real code of that tuple to all later
$\mathcal F_\beta$.  This is the only reservation mechanism used below; there is
no separate external forbidden set attached to a stage.

\begin{definition}[Support below a preliminary coordinate]\label{def:support-below}
Let $\alpha<\omega_3$.  A $\Pp_\alpha$-name $\dot{\mathbb Q}$, or a condition
appearing in such a name, is \emph{supported below $\alpha$ in the preliminary
coordinate} if its preliminary mixed-support component uses only apparatus
coordinates $<\alpha$.  Equivalently, the name is read in the complete subforcing
generated by the preliminary coordinates below $\alpha$ together with the
already constructed c.c.c. iteration below $\alpha$, and it is trivial on all
fresh apparatus blocks $\geq\alpha$.
\end{definition}

This definition is used to keep future branch-versus-specialize blocks fresh.
In particular, an MA stage may use arbitrary c.c.c. forcing only when the name is
read below the current preliminary coordinate in this sense.

\subsection{MA stages}

If $\alpha\in I_{\mathrm{MA}}$ and the bookkeeping presents a $\Pp_\alpha$-name
$\dot{\mathbb Q}$ for a c.c.c. forcing, together with the relevant dense sets,
and $\dot{\mathbb Q}$ is supported below $\alpha$ in the sense of
Definition~\ref{def:support-below}, then let $\dotQ_\alpha=\dot{\mathbb Q}$.
Otherwise $\dotQ_\alpha$ is trivial.  The bookkeeping is chosen so that every
c.c.c. forcing of size $<\aleph_3$ with $<\aleph_3$ many dense sets appears
cofinally often after its name has bounded preliminary support.

The preservation of future coding blocks at these stages uses
Lemma~\ref{lem:jech-subtlety}: c.c.c. forcing on the side does not destroy the
Suslinity of a fresh Jech tree in a product with that Jech coordinate.  Thus MA
stages do not accidentally create branch/specialize codes on unused blocks.

\subsection{Wellorder stages}

If $\alpha\in I_{\mathrm{WO}}$, the bookkeeping presents a pair of names for reals
in $\Wground[G_\alpha]$.  If the names are supported below the fresh wellorder
block assigned to $\alpha$, the iteration performs the FFDZ wellorder coding on
that block: it specializes the trees whose indices lie in
$\Delta_{\mathrm{WO}}(x*y)$, adds branches through the remaining trees, and then
almost-disjointly codes the branch/specialization data by a real.  If the
bookkeeping entry is not of this form, the stage is a dummy wellorder stage.

By Lemma~\ref{lem:wo-sigma13}, the union of these stages yields a lightface
$\Delta^1_3$ wellorder $<_G$ of the reals in the final model.

\subsection{Uniformization stages}\label{subsec:uniformization-stages}

The uniformization stages follow the architecture of
\cite{BPFA_and_global_Sigma-uniformization}.  They use the base predicate $\PsiThree$ to code,
or to forbid coding, single real codes which externally compress possibly
infinite conjunctions or disjunctions of $\Pi^1_2$ or $\Sigma^1_2$ matrix tests.

Let the bookkeeping at a stage $\alpha\in I_\Sigma$ present
\[
(e,\dot x,\xi,\dot b_1,\dot b_2,\ldots),
\]
where $e$ codes a projective formula, $\dot x$ and the $\dot b_i$ are
$\Pp_\alpha$-names for reals, and $0<\xi<\omega_3$ is a candidate index in the
canonical name ordering just described.  Let $x=(\dot x)^{G_\alpha}$ and
$b_i=(\dot b_i)^{G_\alpha}$.  Let
\[
(x,y_\eta,a^\eta_0)_{\eta<\omega_3}
\]
be the evaluated candidate list obtained from the canonical names at the stage;
this notation is purely name-based and does not presuppose that the final
wellorder relation $<_G$ is available in $\Wground[G_\alpha]$.

Suppose first that $e$ codes an odd-level formula
\[
\theta_e(x,y)\equiv \exists a_0\forall a_1\exists a_2\cdots\exists a_{2m-2}
\psi_e(x,y,a_0,a_1,\ldots,a_{2m-2}),
\]
where $\psi_e$ is $\Pi^1_2$.  Decode
\[
\pi_\xi(b_i)=\langle b_i^\eta:\eta<\xi\rangle.
\]
Let $z^O_\xi$ be the recursive real code of
\[
(\#\psi_e,x,y_\xi,a^\xi_0,b_1,\ldots,b_{2m-2}),
\]
and let $\dot z^O_\xi$ be its least canonical name at the stage.  If
\[
\Wground[G_\alpha]\models
\forall\eta<\xi\;\neg\psi_e(x,y_\eta,a^\eta_0,b_1^\eta,\ldots,b_{2m-2}^\eta),
\tag{O1}
\]
and $\dot z^O_\xi\notin\mathcal F_\alpha$, then $\dotQ_\alpha$ is the block
coding forcing which deliberately codes $z^O_\xi$, equivalently makes
\[
\PsiThree(\#\psi_e,x,y_\xi,a^\xi_0,b_1,\ldots,b_{2m-2})
\]
true.  If (O1) fails, the stage is trivial and $\dot z^O_\xi$ is added to the
forbidden set.  If (O1) holds but $\dot z^O_\xi\in\mathcal F_\alpha$, the stage
is also trivial.  Thus once an earlier candidate succeeds, all later compressed
tuples witnessing that fact are forbidden rather than coded.

For even-level formulas the rule is dual.  Suppose
\[
\theta_e(x,y)\equiv \exists a_0\forall a_1\exists a_2\cdots\forall a_{2m-3}
\psi_e(x,y,a_0,a_1,\ldots,a_{2m-3}),
\]
where $\psi_e$ is $\Sigma^1_2$.  Let $z^E_\xi$ be the recursive real code of
\[
(\#\psi_e,x,y_\xi,a^\xi_0,b_1,\ldots,b_{2m-3}).
\]
If
\[
\Wground[G_\alpha]\models
\forall\eta<\xi\;\neg\psi_e(x,y_\eta,a^\eta_0,b_1^\eta,\ldots,b_{2m-3}^\eta),
\tag{E1}
\]
then the stage is trivial and the least canonical name for $z^E_\xi$ is added to
$\mathcal F_{\alpha+1}$.  If (E1) fails and the code is not already forbidden,
then the stage deliberately codes $z^E_\xi$ into a fresh uniformization block.
If the code is already forbidden, the stage is trivial.

\begin{lemma}\label{lem:iteration-ccc}
The iteration just defined is a finite-support c.c.c. iteration of length
$\omega_3$.  Its final model satisfies
\[
\MA+2^{\aleph_0}=\aleph_3.
\]
\end{lemma}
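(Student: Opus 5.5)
The plan is to verify the c.c.c. of every iterand first, then obtain $\MA$ by the standard bookkeeping argument over a finite-support iteration of length $\omega_3$, and finally compute the continuum.

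\textbf{Step 1: each iterand is forced to be c.c.c.} By definition the iteration over $\Wground$ has finite support and length $\omega_3$. At MA stages $\dotQ_\alpha$ is, by construction, a name for a c.c.c. poset, or else trivial. At wellorder stages and at nontrivial $\Sigma$-stages the iterand is a block action $\Qq^{\mathrm{blk}}(\alpha,z)$ followed by almost-disjoint coding. Almost-disjoint coding is $\sigma$-centered. For the block action, I will argue as follows:
\begin{itemize}[label=\textbullet]
\item Specializing an $\omega_1$-tree without cofinal branches, with finite conditions, is c.c.c.
\item Forcing with a Suslin tree is c.c.c.
\item The relevant trees are still Suslin at the moment the block is used, by Lemma~\ref{lem:no-accidental-codes}. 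This applies because the block is fresh and everything before it is a side forcing supported below it.
\end{itemize}

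The delicate point is the finite-support iteration over $n$ inside a block. Forcing with $T_{\omega\cdot\alpha+n}$ must not destroy Suslinity of $T_{\omega\cdot\alpha+m}$ for $m\ne n$, and specializing one tree must not add branches to another. I would handle this exactly as FFDZ do. The trees $T_{\omega\cdot\alpha+n}$ come from a full-support product of Jech forcings, so they are mutually generic. A product-fusion argument in the style of Lemma~\ref{lem:jech-subtlety} shows that any finite product of the tree forcings, together with specializing forcings for the others, remains c.c.c.

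Since finite-support iterations of c.c.c. posets are c.c.c., the whole iteration is c.c.c.

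\textbf{Step 2: Martin's Axiom.} Let $\mathbb Q\in\Wground[G_{\omega_3}]$ be c.c.c. of size $\aleph_2$, and let $\mathcal D$ be a family of $\aleph_2$ dense sets. By Lemma~\ref{lem:bounded-sequence-names} and the $\omega_3$-c.c., both $\mathbb Q$ and $\mathcal D$ have $\Pp_\beta$-names for some $\beta<\omega_3$. These names also use only boundedly many preliminary coordinates, since a name of size $\aleph_2$ involves $\le\aleph_2$ conditions of the mixed-support forcing, each with support of size $\le\aleph_1$.

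Hence by the bookkeeping, at cofinally many $\alpha\in I_{\mathrm{MA}}$ the name is presented and is supported below $\alpha$. At such a stage $\dotQ_\alpha=\mathbb Q$, and $\mathbb Q$ is still c.c.c. in $\Wground[G_\alpha]$ because c.c.c. is downward absolute. The generic for $\dotQ_\alpha$ then meets $\mathcal D$.

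Reducing full $\MA_{<\aleph_3}$ to posets of size $<\aleph_3$ is the usual Löwenheim--Skolem argument.

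\textbf{Step 3: the continuum.} Lemma~\ref{lem:final-cardarith} gives $2^{\aleph_0}=\aleph_3$. The main obstacle is Step 1's claim that the block forcing inside a block preserves c.c.c. in the intermediate model. I would try first to reduce it, via commutation of independent factors, to the preliminary-product situation of Lemma~\ref{lem:jech-subtlety}.
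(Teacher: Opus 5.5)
Your proposal is correct and follows the paper's proof. Each iterand is c.c.c.\ by Baumgartner's specialization theorem, the c.c.c.\ of Suslin-tree forcing, and the c.c.c.\ of almost-disjoint coding, with the block kept fresh by Lemma~\ref{lem:no-accidental-codes}; $\MA$ then follows from the $I_{\mathrm{MA}}$ bookkeeping applied to names of bounded preliminary support, and $2^{\aleph_0}=\aleph_3$ from the cardinal arithmetic.

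You go beyond the paper only at the intra-block point. The block action iterates over $n$, while Lemma~\ref{lem:no-accidental-codes} makes each $T_{\omega\cdot\alpha+n}$ Suslin only separately, and the paper's proof passes over this silently. Your reduction to the c.c.c.\ of finite products of distinct trees in the block is exactly what is needed there, though, like the paper, you defer it to an FFDZ-style fusion argument rather than proving it.
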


\begin{proof}
MA stages are c.c.c. by definition.  Wellorder and uniformization block stages
are c.c.c. by Baumgartner's theorem for specializing Suslin trees together with
the standard fact that forcing with a Suslin tree is c.c.c.; the subsequent
almost-disjoint real coding is c.c.c.  Lemma~\ref{lem:no-accidental-codes}
ensures that the trees used at a fresh block are still Suslin.  Finite-support
iterations of c.c.c. forcings are c.c.c.

The bookkeeping on $I_{\mathrm{MA}}$ meets every c.c.c. forcing of size
$<\aleph_3$ and every family of $<\aleph_3$ many dense sets once its name is
supported below a sufficiently large preliminary coordinate.  Hence $\MA$ holds
in the final model.  The iteration has length $\omega_3$, size $\aleph_3$, and
adds reals cofinally often, so $2^{\aleph_0}=\aleph_3$.
\end{proof}

\section{Derived projective predicates and copied infinitary tests}\label{sec:uniformization}

We now verify that the uniformization stages produce the promised same-level
predicates.  We use Shoenfield absoluteness in the following standard form:
whenever the relevant real parameters belong to both transitive models under
comparison, the $\Pi^1_2$ and $\Sigma^1_2$ matrix statements considered below
have the same truth value in the intermediate extension in which the
bookkeeping decision is made and in the final extension.  No separate lemma is
needed for this fact.

\begin{lemma}\label{lem:bookkeeping-complete}
Let $e$, $x$, a candidate index $\xi<\omega_3$, and finitely many reals
$b_1,\ldots,b_k$ belong to the final extension.  Suppose the canonical candidate
names for $(x,y_\eta,a^\eta_0)_{\eta\leq\xi}$ are fixed as above.  Then there is
some stage $\alpha\in I_\Sigma$ after all these parameters and the relevant
compressed sequence names are supported such that the bookkeeping at $\alpha$
presents exactly the corresponding request
\[
(e,\dot x,\xi,\dot b_1,\ldots,\dot b_k).
\]
At that stage the truth value of every displayed $\Pi^1_2$ or $\Sigma^1_2$ matrix
statement with these real parameters is the same as in the final extension.
Consequently the construction either codes the corresponding tuple or places its
canonical code name into the forbidden set according to the rules of
Section~\ref{subsec:uniformization-stages}.
\end{lemma}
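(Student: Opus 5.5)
We argue in three steps: boundedness of names, the bookkeeping enumeration, and absoluteness of the matrix statements.

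\emph{Boundedness.} By Lemma~\ref{lem:bounded-sequence-names}, each of the finitely many names $\dot x,\dot b_1,\ldots,\dot b_k$ is equivalent to a $\Pp_\beta$-name for some $\beta<\omega_3$. The same holds for the names of the candidates $(x,y_\eta,a^\eta_0)$ with $\eta\leq\xi$: there are at most $|\xi|\leq\aleph_2$ of them, and the canonical name ordering puts names from shorter initial segments first, so the initial segment of length $\xi+1$ is bounded. The compressed sequences $\pi_\xi(b_i)$ are $\xi$-sequences of reals, hence also bounded by Lemma~\ref{lem:bounded-sequence-names}. Since $\omega_3$ is regular, taking the supremum of these fewer than $\aleph_3$ many bounds gives a single $\beta_0<\omega_3$ such that all these names are $\Pp_{\beta_0}$-names. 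Enlarging $\beta_0$ if necessary, we may also assume that the preliminary supports of these names lie below $\beta_0$ in the sense of Definition~\ref{def:support-below}.

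\emph{Bookkeeping.} Fix the bookkeeping function on $I_\Sigma$ in $L$ (or in $\Wground$), using $\GCH$ and the counting of Lemma~\ref{lem:final-cardarith}. At each stage $\gamma$ there are only $\aleph_3$ requests whose names are $\Pp_\gamma$-names. A standard partition of $I_\Sigma$ into $\aleph_3$ cofinal pieces lets every such request be presented cofinally often above $\gamma$. In particular, some $\alpha\in I_\Sigma$ with $\alpha>\beta_0$ presents exactly $(e,\dot x,\xi,\dot b_1,\ldots,\dot b_k)$. Because we have $\alpha>\beta_0$, all parameters are evaluated in $\Wground[G_\alpha]$, and the canonical evaluations agree with their final values. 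The main obstacle I expect is this bookkeeping itself. It must handle names of sequence codes of length up to $\omega_2$, and it must allow the decoding map $\pi_\xi$, chosen externally as in Convention~\ref{conv:compression-maps}, to be fixed before the stage. To handle this, I would fix $\pi_\xi$ as a map on canonical names in $L$, which is legitimate by Lemma~\ref{lem:final-cardarith} applied to the final model together with the name-counting.

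\emph{Absoluteness.} Each displayed matrix, such as $\neg\psi_e(x,y_\eta,a^\eta_0,b_1^\eta,\ldots)$ or $\psi_e$ itself, is $\Pi^1_2$ or $\Sigma^1_2$ in reals lying in $\Wground[G_\alpha]$. By Shoenfield absoluteness between $\Wground[G_\alpha]$ and the final model, it has the same truth value in both. The conditions (O1) and (E1) are conjunctions over $\eta<\xi$ of such statements. Their truth in $\Wground[G_\alpha]$ is therefore evaluated instance by instance, and each instance is absolute, so (O1) and (E1) also agree with the final model. The case distinction of Section~\ref{subsec:uniformization-stages} then applies verbatim. Depending on whether (O1) or (E1) holds and whether $\dot z_\xi$ already lies in $\mathcal F_\alpha$, the stage either codes $z_\xi$ into a fresh block of $\mathcal L_\Sigma$ or adds its name to the forbidden set; the remaining case, where the code is already forbidden, is also a consequence of the forbidden-set rule. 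Fresh blocks exist by Lemma~\ref{lem:no-accidental-codes} and cofinality.
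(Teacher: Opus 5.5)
Your proposal is correct and follows the paper's proof. Both bound all relevant names using Lemma~\ref{lem:bounded-sequence-names} and the regularity of $\omega_3$, use that the bookkeeping on $I_\Sigma$ repeats every request cofinally often, and apply Shoenfield absoluteness to the $\Pi^1_2$/$\Sigma^1_2$ matrices; they then read off the outcome from the stage rules together with monotonicity of the forbidden sets. Your explicit bounding of the candidate names for $\eta\le\xi$ and your instance-by-instance treatment of (O1)/(E1) only make explicit what the paper leaves implicit.

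Your aside about fixing $\pi_\xi$ on canonical names is not needed for this lemma, since the paper simply invokes Convention~\ref{conv:compression-maps}. If you adopt it, you must make the map well defined on reals, e.g.\ by always decoding through the least name of a real, because the coding and forbidding decisions are ultimately about reals rather than names.
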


\begin{proof}
By Lemma~\ref{lem:bounded-sequence-names}, the finitely many real parameters and
the compressed sequence names used by the maps $\pi_\xi$ are all read by some
proper initial segment of the iteration.  Since $I_\Sigma$ is cofinal and the
bookkeeping on $I_\Sigma$ repeats every possible finite request cofinally often,
there is a later $\Sigma$-stage presenting precisely these names.  The matrix
statements which the stage has to evaluate are $\Pi^1_2$ or $\Sigma^1_2$ with the
listed real parameters.  Shoenfield absoluteness gives equality of their truth
values between the intermediate transitive extension at that stage and the final
extension.  The last sentence is then just the definition of the uniformization
stage, together with the monotonicity of the forbidden sets.
\end{proof}

For a tuple $t$ of reals, write
\[
\operatorname{Coded}(t)
\]
for $\PsiThree(z_t)$, where $z_t$ is the fixed recursive real coding $t$.  Write
$\operatorname{NotCoded}(t)$ for its negation.  Since $\PsiThree$ is
$\Sigma^1_3$, these predicates supply the base alternation used in the following
normal forms.

\begin{lemma}[Later candidates are excluded]\label{lem:later-excluded}
Work in the final model and fix an odd-level formula
\[
\theta_e(x,y)\equiv \exists a_0\forall a_1\exists a_2\cdots\exists a_{2m-2}
\psi_e(x,y,a_0,
\ldots,a_{2m-2}),
\]
with $\psi_e\in\Pi^1_2$.  Suppose $\xi$ is such that
\[
\forall a_1\exists a_2\cdots\exists a_{2m-2}
\psi_e(x,y_\xi,a^\xi_0,a_1,\ldots,a_{2m-2})
\]
holds.  Then for every $\beta>\xi$,
\[
\forall a_1\exists a_2\cdots\exists a_{2m-2}\;
\operatorname{NotCoded}(\#\psi_e,x,y_\beta,a^\beta_0,a_1,\ldots,a_{2m-2}).
\tag{1}
\]
In the even-level case, with $\psi_e\in\Sigma^1_2$, the corresponding conclusion
is obtained by replacing $\operatorname{NotCoded}$ in (1) by
$\operatorname{Coded}$.
\end{lemma}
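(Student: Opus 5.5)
We play the quantifier game in (1) by choosing the universal witnesses so that the matrix at the end is $\operatorname{NotCoded}$ of a tuple whose canonical code name has been forbidden. Fix $\beta>\xi$. By hypothesis on $\xi$, the player for the existential side wins the game $\forall a_1\exists a_2\cdots\exists a_{2m-2}\,\psi_e(x,y_\xi,a^\xi_0,\ldots)$. We use this as a strategy, transferred to the tuples with index $\beta$ via compression.

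Given $a_1$ (the real offered by the universal quantifier in (1)), we form, externally, the $\beta$-sequence of reals $\langle b_1^\eta:\eta<\beta\rangle$ whose $\xi$-th entry is the response relevant to the $\xi$-th candidate, and we let $a_1$ itself be read through $\pi_\beta$. More precisely, we respond at each existential step with a real $a_{2j}$ such that $\pi_\beta(a_{2j})$ has $\xi$-th coordinate equal to the winning response $a^{(\xi)}_{2j}$ for the $\xi$-game. Here the $\xi$-game is played against the universal moves given by the $\xi$-th coordinates $\pi_\beta(a_{2j-1})^\xi$. After all moves, the $\xi$-th coordinates satisfy $\psi_e(x,y_\xi,a^\xi_0,b_1^\xi,\ldots,b_{2m-2}^\xi)$, which is $\Pi^1_2$ with real parameters. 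This is exactly a failure of (O1) at index $\beta$ for the request $(e,\dot x,\beta,\dot a_1,\ldots,\dot a_{2m-2})$.

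Next we invoke Lemma~\ref{lem:bounded-sequence-names} to see that the finitely many reals $a_1,\ldots,a_{2m-2}$ and the decoded sequences are read at a bounded stage. Lemma~\ref{lem:bookkeeping-complete} then yields a stage $\alpha\in I_\Sigma$ presenting this request. By Shoenfield absoluteness, the $\Pi^1_2$ statement $\psi_e(x,y_\xi,\ldots)$ holds in $\Wground[G_\alpha]$, so (O1) fails there. The stage is therefore trivial and puts $\dot z^O_\beta$ into the forbidden set. From then on, no $\Sigma$-stage deliberately codes that real.

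To finish, Lemma~\ref{lem:block-correctness} shows that $\PsiThree(z^O_\beta)$ holds iff $z^O_\beta$ was deliberately coded. We must also rule out that an \emph{earlier} stage coded $z^O_\beta$ before it was forbidden. This is the main obstacle. I would handle it by observing that every stage presenting this same request evaluates (O1) identically, by absoluteness, so every such stage declares it failing. Stages with different request data yield different recursive codes, because $z^O_\beta$ records $\#\psi_e,x,y_\beta,a^\beta_0$ and the $a_i$. Hence $\operatorname{NotCoded}$ holds for the resulting tuple, proving (1).

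The even-level case is dual. There the winning play makes (E1) fail at index $\beta$, and the rules code the tuple rather than forbid it, which gives $\operatorname{Coded}$. The forbidding clause cannot block this, since forbidding happens only when (E1) holds, which is again settled uniformly by absoluteness.
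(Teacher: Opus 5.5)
Your route is the paper's: decode each universal move through $\pi_\beta$, answer on the $\xi$-th coordinate with the winning moves for candidate $\xi$ (and freely elsewhere), so that the coordinate $\eta=\xi$ refutes (O1) at index $\beta$. Then Lemma~\ref{lem:bookkeeping-complete} and Shoenfield give a stage that forbids $\dot z^O_\beta$, and Lemma~\ref{lem:block-correctness} turns this into $\operatorname{NotCoded}$.

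The paper's proof cites only monotonicity of forbiddenness and never rules out a stage that codes $z^O_\beta$ \emph{before} it is forbidden. You are right to raise this, but your justification overclaims. The code $z^O_\beta$ records the evaluated reals $y_\beta,a^\beta_0$, not the index $\beta$. The candidate list enumerates pairs of \emph{names}, so different indices can carry the same pair of reals. A request with index $\xi'\neq\beta$ whose names evaluate to the same pair, with the same $a_1,\ldots,a_{2m-2}$, produces the same code. It runs a different test, namely (O1) over $\eta<\xi'$ with decoding $\pi_{\xi'}$. Nothing prevents that test from succeeding (e.g.\ when $\xi'\le\xi$), and it could then code $z^O_\beta$ first.

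To close the step you need two assumptions:
\begin{enumerate}[label=(\alph*)]
\item the candidate enumeration is repetition-free, so the pair $(y_\beta,a^\beta_0)$ determines $\beta$;
\item $\pi_\beta$ is a single map used at every stage.
\end{enumerate}
Then every request producing this code runs literally the same (O1), and Shoenfield gives every such stage the same negative verdict.

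Assumption (a) is implicit in the construction anyway. Suppose a pair occurred both at the least successful $\xi>0$ and at some $\beta>\xi$. Then (1) here would be the exact negation of (2) in Lemma~\ref{lem:least-selected} for the same tuples. The same caveat applies to your even-level claim that the code can never be forbidden.
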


\begin{proof}
We prove the odd case.  Fix $\beta>\xi$ and let $a_1$ be arbitrary.  Decode
$a_1$ externally as
\[
\pi_\beta(a_1)=\langle a_1^\eta:\eta<\beta\rangle.
\]
Since the candidate $\xi$ is successful, there are witnesses
$a_2^\xi,a_3^\xi,\ldots,a_{2m-2}^\xi$ satisfying the remaining alternation for
\[
\psi_e(x,y_\xi,a^\xi_0,a_1^\xi,a_2^\xi,\ldots,a_{2m-2}^\xi).
\]
Choose arbitrary values for the other coordinates and compress the resulting
families by the maps $\pi_\beta$ to obtain reals $a_2,\ldots,a_{2m-2}$.  By
Lemma~\ref{lem:bookkeeping-complete}, the request for the tuple
\[
(\#\psi_e,x,y_\beta,a^\beta_0,a_1,\ldots,a_{2m-2})
\]
is presented at some sufficiently late $\Sigma$-stage.  At that stage the test
(O1) fails, because the coordinate $\eta=\xi$ makes the $\Pi^1_2$ matrix true
there exactly as in the final model.  Hence the construction places the
canonical code name for this tuple into the forbidden set.  By monotonicity of
forbiddenness it is never later deliberately coded, and by
Lemma~\ref{lem:block-correctness} it is not $\PsiThree$-coded in the final model.
This gives (1).  The even case is the same argument with the dual coding rule.
\end{proof}

\begin{lemma}[The least successful candidate is selected]\label{lem:least-selected}
Work in the final model.  In the odd-level case, suppose $\xi>0$ is least such
that
\[
\forall a_1\exists a_2\cdots\exists a_{2m-2}
\psi_e(x,y_\xi,a^\xi_0,a_1,\ldots,a_{2m-2})
\]
holds.  Then
\[
\exists a_1\forall a_2\cdots\forall a_{2m-2}\;
\operatorname{Coded}(\#\psi_e,x,y_\xi,a^\xi_0,a_1,\ldots,a_{2m-2}).
\tag{2}
\]
In the even-level case the corresponding conclusion is obtained by replacing
$\operatorname{Coded}$ in (2) by $\operatorname{NotCoded}$.
\end{lemma}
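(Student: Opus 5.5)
We treat the odd case; the even case is the same argument with the roles of coding and forbidding exchanged, via the dual rule (E1). The proof mirrors Lemma~\ref{lem:later-excluded}: there the forbidding mechanism produced universal $\operatorname{NotCoded}$ statements, and here the coding mechanism must produce an existential $\operatorname{Coded}$ statement. The witness $a_1$ in (2) is chosen as a compressed real. By minimality of $\xi$, for every $\eta<\xi$ (with $\eta>0$, and treating the dummy candidate $\eta=0$ separately) the statement $\forall a_1\exists a_2\cdots\psi_e(x,y_\eta,a^\eta_0,\ldots)$ fails. Hence there is a counter-witness $a_1^\eta$ such that $\forall a_2\exists a_3\cdots\neg\psi_e(x,y_\eta,a^\eta_0,a_1^\eta,\ldots)$. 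Using Convention~\ref{conv:compression-maps} and Lemma~\ref{lem:final-cardarith}, we pick $a_1$ with $\pi_\xi(a_1)=\langle a_1^\eta:\eta<\xi\rangle$.

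Next fix arbitrary $a_2$. For each $\eta<\xi$, the counter-witness strategy gives a response $a_3^\eta$ to $a_2^\eta$. Iterating through the universal quantifiers, the compressed responses $a_3,a_5,\ldots$ are formed from the $\eta$-wise responses. In the end, for every choice of $a_2,a_4,\ldots,a_{2m-2}$ there are (compressed) existential choices such that
\[
\forall\eta<\xi\;\neg\psi_e(x,y_\eta,a^\eta_0,a_1^\eta,\ldots,a_{2m-2}^\eta).
\]
Formally the quantifier string in (2) reads $\exists a_1\forall a_2\cdots\forall a_{2m-2}$, so the existential entries $a_3,\ldots$ are the ones we supply; we therefore arrange the alternation as a game in which we answer coordinatewise.

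For the resulting tuple, Lemma~\ref{lem:bookkeeping-complete} provides a late $\Sigma$-stage $\alpha$ presenting the request $(e,\dot x,\xi,\dot a_1,\ldots,\dot a_{2m-2})$. By Shoenfield absoluteness, (O1) holds in $\Wground[G_\alpha]$, because each $\neg\psi_e$ instance is $\Sigma^1_2$ and true in the final model. The construction then codes $z^O_\xi$, provided its name is not already in $\mathcal F_\alpha$. Once it is coded, Lemma~\ref{lem:block-correctness} gives $\operatorname{Coded}$ of the tuple in the final model, and this yields (2).

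The main obstacle is excluding forbiddenness. A name is forbidden only when a stage presenting that same tuple found (O1) false. By the absoluteness just used, (O1) has the same truth value at every stage at which this request is presented, since all of its matrix parameters are fixed reals. So the name never enters $\mathcal F$ through a failure of (O1). We must also check that the even-case rule (E1) only forbids codes carrying even-level formula indices; these are distinct from $\#\psi_e$ of odd type, so they cannot collide with our code. The remaining case is that the tuple was coded already at an earlier stage, which yields $\operatorname{Coded}$ directly. The delicate point we expect is the uniformity of the absoluteness: the decoded families $\pi_\xi(a_i)$ must be read identically at the stage and in the final model, which requires the compression maps to be fixed in a way that does not change across stages. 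If this fails, we would fall back on choosing the stage late enough that all $\xi$ coordinates have appeared, via Lemma~\ref{lem:bounded-sequence-names}.
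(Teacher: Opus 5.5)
Your proposal is essentially the paper's proof: compress the successive layers of counter-witnesses for all $\eta<\xi$ through $\pi_\xi$ into a strategy for the existential variables of (2), use Lemma~\ref{lem:bookkeeping-complete} and Shoenfield absoluteness to find a stage at which (O1) holds, rule out prior forbiddenness because forbidding this code would require (O1) to fail, and conclude with Lemma~\ref{lem:block-correctness}. One caution: the non-forbiddenness step needs (O1) to be true at \emph{every} stage that could forbid this code. So the compression maps must be read at each such stage exactly as in the final model, and the code must determine $\xi$, i.e.\ the pair $(y_\xi,a^\xi_0)$ must not recur later in the candidate list. The paper's proof tacitly assumes this as well, whereas your fallback of choosing a single late stage would not repair a failure of this uniformity.
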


\begin{proof}
Again we prove the odd case.  Since $\xi$ is least successful, for each
$\eta<\xi$ there are counter-witnesses arranged according to the dual quantifier
pattern witnessing the failure of
\[
\forall a_1\exists a_2\cdots\exists a_{2m-2}
\psi_e(x,y_\eta,a^\eta_0,a_1,\ldots,a_{2m-2}).
\]
Using Convention~\ref{conv:compression-maps}, compress the first layer of these
counter-witnesses into a single real $a_1$.  Now let $a_2$ be arbitrary.  Decode
it into the corresponding family, choose the next layer of counter-witnesses for
each $\eta<\xi$, compress that layer into $a_3$, and continue through the
finite alternating pattern.  The result is that, for every play of the universal
variables on the coded tuple, all earlier coordinates $\eta<\xi$ make the matrix
false.

Thus, for every choice of the universal variables in the displayed tuple,
Lemma~\ref{lem:bookkeeping-complete} gives a sufficiently late $\Sigma$-stage at
which the corresponding request is presented and the test (O1) holds.  If the
code had already been forbidden, then it would have been forbidden by an earlier
application of the same rule, which would require some earlier coordinate to
make the matrix true; this contradicts the compressed family of earlier
failures.  Hence the construction codes the tuple into a fresh uniformization
block.  Lemma~\ref{lem:block-correctness} then says that it is $\PsiThree$-coded
in the final model.  This proves (2).  The even-level case is the dual rule: the
same compressed family of earlier failures causes the tuple to be forbidden
rather than coded, giving the displayed statement with $\operatorname{NotCoded}$.
\end{proof}

\section{Uniformization}\label{sec:uniformization-final}

We now define the final uniformizing predicates explicitly.  No separate
acceptance predicate is needed.

For an odd-level formula
\[
\theta_e(x,y)\equiv \exists a_0\forall a_1\exists a_2\cdots\exists a_{2m-2}
\psi_e(x,y,a_0,a_1,\ldots,a_{2m-2}),
\quad \psi_e\in\Pi^1_2,
\]
let $U_e(x,y)$ be the disjunction of the following two clauses:
\begin{enumerate}[label=(\roman*)]
\item $y=0$ and
\[
\forall a_1\exists a_2\cdots\exists a_{2m-2}
\psi_e(x,0,0,a_1,\ldots,a_{2m-2});
\]
\item the clause in (i) fails and there is $a_0$ such that
\[
\forall a_1\exists a_2\cdots\exists a_{2m-2}
\psi_e(x,y,a_0,a_1,\ldots,a_{2m-2})
\]
and
\[
\neg\Bigl[\forall a_1\exists a_2\cdots\exists a_{2m-2}\;
\operatorname{NotCoded}(\#\psi_e,x,y,a_0,a_1,\ldots,a_{2m-2})\Bigr].
\]
\end{enumerate}
This is a $\Sigma^1_{2m+1}$ predicate.  The last negated bracket has exactly the
same projective complexity as the original existential-leading formula because
$\operatorname{NotCoded}$ is the negation of the base $\Sigma^1_3$ predicate and
appears under the corresponding alternating quantifier pattern.

For an even-level formula
\[
\theta_e(x,y)\equiv \exists a_0\forall a_1\exists a_2\cdots\forall a_{2m-3}
\psi_e(x,y,a_0,a_1,\ldots,a_{2m-3}),
\quad \psi_e\in\Sigma^1_2,
\]
let $U_e(x,y)$ be the disjunction of the following two clauses:
\begin{enumerate}[label=(\roman*)]
\item $y=0$ and
\[
\forall a_1\exists a_2\cdots\forall a_{2m-3}
\psi_e(x,0,0,a_1,\ldots,a_{2m-3});
\]
\item the clause in (i) fails and there is $a_0$ such that
\[
\forall a_1\exists a_2\cdots\forall a_{2m-3}
\psi_e(x,y,a_0,a_1,\ldots,a_{2m-3})
\]
and
\[
\neg\Bigl[\forall a_1\exists a_2\cdots\forall a_{2m-3}\;
\operatorname{Coded}(\#\psi_e,x,y,a_0,a_1,\ldots,a_{2m-3})\Bigr].
\]
\end{enumerate}
This is a $\Sigma^1_{2m}$ predicate by the same complexity calculation, with the
parity of the base coded/non-coded clause reversed.

\begin{theorem}[Main theorem]\label{thm:main}
There is a generic extension of $L$ satisfying
\[
\MA+2^{\aleph_0}=\aleph_3
\]
such that the reals have a lightface $\Delta^1_3$ wellorder and, for every
$n\geq 2$, every $\Sigma^1_n$ set of pairs of reals admits a $\Sigma^1_n$
uniformization.
\end{theorem}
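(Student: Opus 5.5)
The plan is to take the model $\Wground[G_{\omega_3}]$, where $\Wground=L[G_0]$ and $G_{\omega_3}$ is generic for the finite-support iteration of Section~\ref{sec:iteration}, and to assemble the three parts of the statement from results already proved.

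\emph{Martin's Axiom, continuum and the wellorder.} Lemma~\ref{lem:iteration-ccc} gives $\MA+2^{\aleph_0}=\aleph_3$. Lemma~\ref{lem:wo-sigma13}, applied to the wellorder stages on $I_{\mathrm{WO}}$, gives the lightface $\Delta^1_3$ wellorder. Here I would check that the uniformization blocks cannot interfere with the $\WO$ decoding. Two facts handle this: the classes $\mathcal L_{\mathrm{WO}}$ and $\mathcal L_\Sigma$ are disjoint, and Lemma~\ref{lem:no-accidental-codes} shows that no unused block carries a pattern.

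\emph{Uniformization.} The case $n=2$ is Kondo--Novikov. For $n\geq 3$, fix a $\Sigma^1_n$ set $A$. I write it as $\theta_e(x,y)$ in the normal form of Section~\ref{subsec:uniformization-stages}, with the odd or even case chosen by the parity of $n$. I then absorb $y$ together with the leading witness $a_0$ into the candidate list $(x,y_\xi,a^\xi_0)$. This list is indexed by the name ordering, which in the final model agrees with $<_G$. The claim is that $U_e$ uniformizes the projection $\{(x,y):\exists a_0\,\psi\text{-alternation}\}$, and I verify this in four steps.
\begin{enumerate}[label=(\roman*)]
\item \emph{Complexity.} $U_e$ is $\Sigma^1_n$, by the computation given after its definition. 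This uses that $\operatorname{Coded}$ is $\Sigma^1_3$ by Lemma~\ref{lem:psi3-complexity} and that $\operatorname{NotCoded}$ is $\Pi^1_3$.
\item \emph{$U_e\subseteq A$.} Both clauses explicitly assert $\theta_e(x,y)$.
\item \emph{Existence of a value.} Suppose $x$ is in the domain and clause (i) fails. Let $\xi>0$ be the least successful candidate. Lemma~\ref{lem:least-selected} gives statement (2). Statement (2) is literally the negation of the bracket in clause (ii), since $\neg\forall\exists\cdots\operatorname{NotCoded}$ is equivalent to $\exists\forall\cdots\operatorname{Coded}$. Hence $U_e(x,y_\xi)$ holds, with witness $a_0=a^\xi_0$.
\item \emph{Uniqueness.} Suppose $U_e(x,y)$ holds via some $a_0$, so that $(x,y,a_0)=(x,y_\beta,a^\beta_0)$ for some index $\beta$. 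If $\beta>\xi$, Lemma~\ref{lem:later-excluded} gives (1), which contradicts the negated bracket. If $\beta<\xi$, then $\beta$ is not successful, contradicting minimality of $\xi$. So $\beta=\xi$.
\end{enumerate}
Uniqueness is only needed in $y$. A single $y$ can occur with several $a_0$ at different indices $\beta$, but the argument in (iv) runs over every admissible $a_0$ and therefore still pins down $y=y_\xi$. The even case is the same argument, with $\operatorname{Coded}$ and $\operatorname{NotCoded}$ exchanged in the dual lemmas.

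\emph{Main obstacle.} The hard step is making Lemmas~\ref{lem:later-excluded} and \ref{lem:least-selected} apply to \emph{every} play of the universal variables. This requires two things:
\begin{itemize}
\item Every relevant tuple, including those whose entries decode via $\pi_\beta$ to length-$\beta$ families, must be presented at some $\Sigma$-stage. Lemma~\ref{lem:bookkeeping-complete} with Lemma~\ref{lem:bounded-sequence-names} supplies this.
\item The forbidden-set mechanism and the coding mechanism must never both act on the same code. Otherwise $\PsiThree$ would fail to match the matrix truth value given by Shoenfield absoluteness.
\end{itemize}
The natural tool for the second point is Lemma~\ref{lem:block-correctness}, which gives exactness of $\PsiThree$ on deliberately coded reals, combined with monotonicity of $\mathcal F_\alpha$. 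I expect some care to be needed to confirm that the first decision made about a given code is already the correct one. I would argue this from absoluteness: every stage that presents the same tuple computes the same truth value for (O1) or (E1).
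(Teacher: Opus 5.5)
Your proposal is correct and follows essentially the same route as the paper's proof. Lemma~\ref{lem:iteration-ccc} and Lemma~\ref{lem:wo-sigma13} give $\MA+2^{\aleph_0}=\aleph_3$ and the wellorder, Kondo--Novikov handles $n=2$, and for $n\geq 3$ the predicate $U_e$ does the work: Lemma~\ref{lem:least-selected} gives a value at the least successful candidate $\xi$, and Lemma~\ref{lem:later-excluded} plus minimality of $\xi$ gives single-valuedness. Your added checks (disjointness of the wellorder and uniformization blocks, and the absoluteness argument that the first decision about a code is already correct) spell out what the paper leaves inside those lemmas. Like the paper, though, they rely on the candidate list containing each pair $(y,a_0)$ only once, as in your reading via $<_G$, because the code $z^O_\xi$ does not record $\xi$, and $\xi$ is what fixes the range $\eta<\xi$ and the decoding $\pi_\xi$ in (O1)/(E1).
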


\begin{proof}
Let $G$ be generic for the iteration described above.  Lemma~\ref{lem:iteration-ccc}
gives $\MA$ and $2^{\aleph_0}=\aleph_3$.  The stages in $I_{\mathrm{WO}}$ reproduce
the FFDZ proof, and Lemma~\ref{lem:wo-sigma13} gives a lightface $\Delta^1_3$
wellorder $<_G$ of the reals.

The case $n=2$ follows in $\ZFC$ from the Kondo--Novikov theorem.  Let $n\geq3$
and let $\theta_e(x,y)$ be a $\Sigma^1_n$ formula.  We discuss the odd case; the
even case is dual.  If the $x$-section is empty, no value is assigned.  If it is
nonempty and the initial triple $(x,0,0)$ is successful, then clause (i) defines
$U_e(x,0)$.  Otherwise let $\xi>0$ be least such that some leading witness
$a^\xi_0$ makes the tail matrix true for the candidate $y_\xi$.  Lemma~\ref{lem:least-selected}
shows that the second clause of the definition holds for $(x,y_\xi)$.
For every $\beta>\xi$, Lemma~\ref{lem:later-excluded} gives the full
non-coding tail, so the negated bracket in the definition fails.  For
$\beta<\xi$, the original matrix tail fails by minimality of $\xi$.  Hence the
relation $U_e$ is single-valued, has domain equal to the projection of
$\theta_e$, and is contained in $\theta_e$.  Its projective complexity is
$\Sigma^1_n$ by the displayed normal form.  Thus $U_e$ is the desired
same-level uniformization.
\end{proof}

\section{A cardinal-characteristic variant}\label{sec:cardinal-variant}

The method is modular.  If full Martin's Axiom is replaced by the
cardinal-characteristic bookkeeping of Fischer--Friedman--Zdomskyy, the same
wellorder and uniformization stages can be retained.

\begin{theorem}\label{thm:cardinal-variant}
There is a generic extension in which $2^{\aleph_0}=\aleph_3$, the
$\Sigma^1_n$-uniformization property holds for every $n\geq2$, the reals have a
lightface $\Delta^1_3$ wellorder, and
\[
\mathfrak p=\mathfrak b=\aleph_2<\mathfrak a=\mathfrak s=\mathfrak c=\aleph_3.
\]
\end{theorem}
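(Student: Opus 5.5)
The plan is to keep the wellorder stages and the uniformization stages of Section~\ref{sec:iteration} exactly as they are, and to replace only the MA stages by the FFDZ cardinal-characteristic bookkeeping. The construction will still run over $\Wground=L[G_0]$ as a finite-support c.c.c. iteration of length $\omega_3$, with coordinates partitioned into $I_{\mathrm{MA}},I_{\mathrm{WO}},I_\Sigma$. At stages in $I_{\mathrm{MA}}$ we no longer meet every c.c.c. poset of size $<\aleph_3$. Following \cite{FISCHER2013763}, we only meet c.c.c. posets of size $\le\aleph_1$, so that $\mathfrak p\ge\aleph_2$. In addition we interleave, cofinally often, Mathias-type forcings relative to filters generated by fewer than $\aleph_3$ sets and almost-disjointness forcings that destroy maximality of given a.d. families of size $<\aleph_3$; these push $\mathfrak a$ and $\mathfrak s$ up to $\aleph_3$. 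Every such name must be supported below the current preliminary coordinate, in the sense of Definition~\ref{def:support-below}.

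The first step is to check that everything used in the proof of Theorem~\ref{thm:main} survives this change. Only three properties of the MA stages were used there.
\begin{itemize}
\item The MA stages are c.c.c. side forcings supported below each later fresh block. Lemma~\ref{lem:no-accidental-codes}, via Lemma~\ref{lem:jech-subtlety}, then keeps the future blocks Suslin, so the block discipline behind Lemma~\ref{lem:block-correctness} still holds.
\item The iterands have size $<\aleph_3$, which is what Lemmas~\ref{lem:bounded-sequence-names} and~\ref{lem:final-cardarith} need. Hence the compression maps of Convention~\ref{conv:compression-maps} remain available.
\item The iteration is finite support and c.c.c., so Shoenfield absoluteness applies between intermediate and final models as in Lemma~\ref{lem:bookkeeping-complete}.
\end{itemize}
All three hold for the modified MA stages. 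Consequently Lemmas~\ref{lem:wo-sigma13}, \ref{lem:later-excluded} and~\ref{lem:least-selected} go through verbatim. The proof of Theorem~\ref{thm:main}, minus its appeal to $\MA$, then gives the $\Delta^1_3$ wellorder and $\Sigma^1_n$-uniformization for all $n\ge2$, and $2^{\aleph_0}=\aleph_3$ follows from Lemma~\ref{lem:final-cardarith}.

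The second step is the cardinal characteristics.
\begin{itemize}
\item For $\mathfrak p\ge\aleph_2$: every $\sigma$-centered poset of size $\aleph_1$, together with $\aleph_1$ dense sets, is captured at a bounded stage and handled cofinally often.
\item For $\mathfrak a=\mathfrak s=\aleph_3$: any a.d. family or splitting family of size $\aleph_2$ appears at a bounded stage by Lemma~\ref{lem:bounded-sequence-names}, and is then destroyed by the designated iterand.
\item For the upper bound $\mathfrak b\le\aleph_2$: I would reproduce the FFDZ argument that the iterands are arranged so that the $\aleph_2$ many reals added along a suitable cofinal subsequence of stages of cofinality $\omega_2$ remain unbounded. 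This uses that the dominating-type forcings used are only of size $\le\aleph_1$, or are Suslin/$\sigma$-centered forcings that preserve a fixed unbounded family of size $\aleph_2$ chosen along stages of cofinality $\omega_2$. Together with $\mathfrak p\le\mathfrak b$ this gives $\mathfrak p=\mathfrak b=\aleph_2$.
\end{itemize}

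The main obstacle is this preservation of an unbounded family of size $\aleph_2$ through the \emph{new} stages: the uniformization block forcings (Suslin-tree branch and specialization forcings) and the almost-disjoint real-coding steps. I would first check whether these stages are already of a kind that preserves unboundedness. Specializing an $\omega_1$-tree, forcing with a Suslin tree (which adds no reals) and almost-disjoint coding are all $\sigma$-centered or Suslin c.c.c. forcings, so the FFDZ preservation theorem for $\mathfrak b$ should apply to them exactly as to the wellorder stages. If this check succeeds, the uniformization stages behave for the cardinal characteristics exactly like the wellorder coding stages, and the FFDZ computation goes through unchanged.
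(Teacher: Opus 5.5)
Your architecture matches the paper's: you keep the wellorder and $\Sigma$-stages, replace the $I_{\mathrm{MA}}$ bookkeeping by the FFDZ cardinal-characteristic bookkeeping, and check that the coding stages do no damage. Your first step, the survival of the uniformization and wellorder arguments, agrees with the paper. The half that proves $\mathfrak b\le\aleph_2$, however, has a genuine gap, in two places.

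\textbf{The $\mathfrak s$- and $\mathfrak a$-stages.} You propose Mathias forcing relative to filters generated by $\aleph_2$ sets, and the usual almost-disjointness forcing against a.d.\ families of size $\aleph_2$. Nothing you say prevents these from adding a real dominating your unbounded family; Mathias forcing relative to a filter adds a dominating real unless the filter is Canjar. This is why the paper, following FFDZ, does two things. First, it adds a $<^*$-increasing unbounded $H=\langle h_\xi:\xi<\omega_2\rangle$ by Hechler stages below $\omega_2$. Second, above $\omega_2$ it uses only the Fischer--Steprans posets (adding unsplit reals) and Brendle's posets (destroying mad families of size $\le\aleph_2$), which are designed precisely to preserve the unboundedness of $H$. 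Your witness for $\mathfrak b\le\aleph_2$, the reals ``along a suitable cofinal subsequence of stages of cofinality $\omega_2$'', does not pin down a family of size $\aleph_2$ that every later iterand is obliged to preserve. As written, nothing in your argument rules out $\mathfrak b=\aleph_3$.

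\textbf{The coding stages.} Your check for the new stages rests on a false principle. Being $\sigma$-centered or Suslin c.c.c.\ does not imply preservation of unbounded families. Hechler forcing is both and adds a dominating real. Its restriction to second coordinates from a size-$\aleph_2$ set containing $H$ is a $\sigma$-centered poset of size $\aleph_2$ that dominates $H$.

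The correct reason, which the paper invokes, is size. Each block action is a finite-support $\omega$-iteration of Baumgartner specializations and tree forcings on $\omega_1$-trees. The subsequent real coding codes a subset of $\omega_1$. So every such iterand has size $\aleph_1$.

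Any forcing $\Pp$ with $|\Pp|\le\aleph_1$ preserves the unboundedness of a $<^*$-increasing $\omega_2$-sequence. Given a name $\dot f$ and $p\in\Pp$, let $f_p(n)$ be the least $k$ such that some $q\le p$ forces $\dot f(n)=k$. By regularity of $\omega_2$ and monotonicity of $H$ there is $\xi$ with $h_\xi\not\le^* f_p$ for all $p$. Then no condition forces $h_\xi\le^*\dot f$.

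Combined with the finite-support limit-stage preservation used in FFDZ, this carries $H$ through all wellorder and uniformization stages. Then $\mathfrak p=\mathfrak b=\aleph_2$ follows as in the paper via Bell's theorem and $\mathfrak p\le\mathfrak b$.
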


\begin{proof}
Use the same preliminary FFDZ apparatus and the same wellorder and global
$\Sigma$-uniformization bookkeeping.  Replace the class $I_{\mathrm{MA}}$ by the
successor-stage bookkeeping used in \cite[Section 3]{FISCHER2013763}.  Below
$\omega_2$ add a $<^*$-increasing unbounded scale
$H=\langle h_\xi:\xi<\omega_2\rangle$ by Hechler stages.  Above $\omega_2$, use
cofinal classes of successor stages for the Fischer--Steprans forcings adding
unsplit reals while preserving $H$, for Brendle's forcings destroying small mad
families while preserving $H$, and for all $\sigma$-centered posets of size at
most $\aleph_1$.

The additional wellorder and uniformization stages are of the same small c.c.c.
branch-versus-specialize type as in the main construction.  They have size at
most $\aleph_1$ at the relevant stages, or belong to the closed preliminary
apparatus, and hence preserve the unboundedness of $H$ by the same preservation
argument used in \cite{FISCHER2013763}.  Lemma~\ref{lem:jech-subtlety} again
prevents the additional c.c.c. stages from accidentally creating codes on fresh
Jech blocks.  Therefore the exactness of $\PsiThree$ and the proof of global
$\Sigma$-uniformization are unchanged.

The cardinal-characteristic computation is the one from
\cite[Corollary 3.1]{FISCHER2013763}.  The iteration has continuum $\aleph_3$;
forcing with all $\sigma$-centered posets of size at most $\aleph_1$ gives
$\MA_{<\omega_2}(\sigma\text{-centered})$, hence $\mathfrak p=\aleph_2$ by Bell's
theorem \cite{Bell}.  Since $\mathfrak p\leq\mathfrak b$ and $H$ remains
unbounded of size $\aleph_2$, we get $\mathfrak b=\aleph_2$.  Cofinal
Fischer--Steprans stages give $\mathfrak s=\aleph_3$, and cofinal Brendle stages
destroy every name for a maximal almost disjoint family of size at most
$\aleph_2$, giving $\mathfrak a=\aleph_3$.
\end{proof}

\section{Questions}\label{sec:questions}

We close with several questions suggested by the construction.

The first question concerns the compatibility of Martin's Axiom with
\(\Pi\)-side uniformization.  The present paper shows that Martin's Axiom is
compatible with global \(\Sigma\)-uniformization.  It is natural to ask whether
one can obtain a dual phenomenon.

\begin{question}
Is it consistent that Martin's Axiom holds and the boldface
\(\Pi^1_3\)-uniformization property holds?
\end{question}

This question seems innocent, but it is not addressed by the available
technology.  In fact, stronger forcing axioms can push in the opposite direction:
by \cite{HoffelnerForcingAxioms}, \(\BPFA\) together with the anti-large-cardinal
assumption \(\omega_1=\omega_1^L\) implies boldface
\(\Sigma^1_3\)-uniformization.  Since same-level uniformization for a pointclass
is incompatible with same-level uniformization for the dual pointclass, this
shows that the analogous question for \(\BPFA+\omega_1=\omega_1^L\) has a
negative answer.  The case of \(\MA\), however, remains open.

A related problem is whether the theorem above can be obtained without producing
a projective wellorder.

\begin{question}
Is there a model of global \(\Sigma\)-uniformization in which there is no
projective wellorder of the reals at the corresponding low level?
\end{question}

All known forcing constructions of global \(\Sigma\)-uniformization, including
the present one and the construction in
\cite{BPFA_and_global_Sigma-uniformization}, are compatible with, and in practice
produce, a projective wellorder.  Removing the wellorder from the construction
would require a substantially different method for making the least choices
projectively visible.

One may also ask whether the \(\Sigma\)-side pattern can be combined with richer
cardinal-characteristic configurations.

\begin{question}
Which assignments of values to the standard cardinal characteristics of the
continuum are compatible with global \(\Sigma\)-uniformization and a projective
wellorder of the reals?
\end{question}

Finally, the present work belongs to a broader attempt to force prescribed
patterns of projective uniformization.

\begin{question}
Let \(E\subseteq\omega\).  Is it consistent that
\(\Sigma^1_n\)-uniformization holds exactly for those \(n\in E\), subject to the
trivial restrictions coming from ZFC implications such as the Kondo--Novikov
theorem at level \(2\)?
\end{question}

Partial results in this direction are obtained in
\cite{HoffelnerUpperSigma,HoffelnerSigma34}, but a general pattern theorem seems
to require new ideas.

\bibliographystyle{plain}
\bibliography{references}

\end{document}